\numberwithin{equation}{section}
\newtheorem{Theorem}{Theorem}[section]
\newtheorem*{Theorem*}{Theorem}
\newtheorem{Corollary}[Theorem]{Corollary}
\newtheorem{Lemma}[Theorem]{Lemma}
\newtheorem{Proposition}[Theorem]{Proposition}
 { \theoremstyle{definition}

\newtheorem{Example}[Theorem]{Example}
\newtheorem{Remark}[Theorem]{Remark} }
\def\Var{\mathrm{Var}}
\def\Nov{\mathrm{Nov}}
\def\Com{\mathrm{Com}}
\def\As{\mathrm{As}}
\def\Lie{\mathrm{Lie}}
\def\Perm{\mathrm{Perm}}
\def\Zinb{\mathrm{Zinb}}
\def\Pois{\mathrm{Pois}}
\def\Jord{\mathrm{Jord}}
\def\SJord{\mathrm{SJord}}
\def\pre{\mathrm{pre}}
\def\wt {\mathop {\fam 0 wt}\nolimits}
\def\id {\mathop {\fam 0 id}\nolimits}
\def\Der {\mathop {\fam 0 Der}\nolimits}
\def\End {\mathop {\fam 0 End}\nolimits}
\begin{document}

\allowdisplaybreaks

\newcommand{\arXivNumber}{2305.07371}

\renewcommand{\PaperNumber}{017}

\FirstPageHeading

\ShortArticleName{On Pre-Novikov Algebras and Derived Zinbiel Variety}

\ArticleName{On Pre-Novikov Algebras and Derived Zinbiel Variety}

\Author{Pavel KOLESNIKOV~$^{\rm a}$, Farukh MASHUROV~$^{\rm b}$ and Bauyrzhan SARTAYEV~$^{\rm cd}$}

\AuthorNameForHeading{P.~Kolesnikov, F.~Mashurov and B.~Sartayev}

\Address{$^{\rm a)}$~Sobolev Institute of Mathematics, Novosibirsk, Russia}
\EmailD{\href{mailto:pavelsk77@gmail.com}{pavelsk77@gmail.com}}

\Address{$^{\rm b)}$~Shenzhen International Center for Mathematics (SICM),\\
\hphantom{$^{\rm b)}$}~Southern University of Science and Technology, Shenzhen, Guangdong, P.R.~China}
\EmailD{\href{mailto:f.mashurov@gmail.com}{f.mashurov@gmail.com}}

\Address{$^{\rm c)}$~Narxoz University, Almaty, Kazakhstan}
\EmailD{\href{mailto:baurjai@gmail.com}{baurjai@gmail.com}}

\Address{$^{\rm d)}$~United Arab Emirates University, Al Ain, United Arab Emirates}

\ArticleDates{Received August 31, 2023, in final form February 05, 2024; Published online February 28, 2024}

\Abstract{For a non-associative algebra $A$ with a derivation $d$, its derived algebra $A^{(d)}$ is the same space equipped with new operations $a\succ b = d(a)b$, $a\prec b = ad(b)$, $a,b\in A$. Given a variety $\Var $ of algebras, its derived variety is generated by all derived algebras~$A^{(d)}$ for all~$A$ in $\Var $ and for all derivations $d$ of $A$. The same terminology is applied to binary operads governing varieties of non-associative algebras. For example, the operad of Novikov algebras is the derived one for the operad of (associative) commutative algebras. We state a sufficient condition for every algebra from a derived variety to be embeddable into an appropriate differential algebra of the corresponding variety. We also find that for $\Var = \Zinb$, the variety of Zinbiel algebras, there exist algebras from the derived variety (which coincides with the class of pre-Novikov algebras) that cannot be embedded into a Zinbiel algebra with a derivation.}

\Keywords{Novikov algebra; derivation; dendriform algebra; Zinbiel algebra}

\Classification{17A36; 17A30; 18M60}

\section{Introduction}

The class of nonassociative algebras with one binary operation satisfying the identities of left symmetry
\begin{equation}\label{eq:LSym}
(xy)z - x(yz) = (yx)z - y(xz)
\end{equation}
and right commutativity
\begin{equation}\label{eq:RCom}
(xy)z = (xz)y
\end{equation}
is known as the variety of {\em Novikov algebras}.
Relations \eqref{eq:LSym} and \eqref{eq:RCom} emerged
in \cite{BN1985, GD79} as a~tool
for expressing certain conditions on the components of
a tensor of rank~3 considered
as a collection of structure constants of a finite-dimensional
 algebra with one bilinear operation.
 In~\cite{GD79}, it was a sufficient condition
for a differential operator to be Hamiltonian;
in \cite{BN1985} it was a condition to guarantee
the Jacobi identity for a generalized Poisson bracket in
the framework of Hamiltonian formalism for partial differential
equations of hydrodynamic type.

Novikov algebras may be obtained from (associative) commutative algebras
with a derivation by means of the following operation-transforming functor (see \cite{GD79}).
Assume $A$ is a commutative algebra
with a multiplication~$*$, and let $d$ be a derivation of $A$, i.e.,
 a linear operator $d\colon A\to A$
satisfying the Leibniz rule
\[
d(a*b) = d(a)*b+a*d(b), \qquad a,b\in A.
\]
Then the new operation
\begin{equation}\label{eq:Gelfand}
ab = a*d(b), \qquad a,b\in A,
\end{equation}
satisfies the identities \eqref{eq:LSym} and \eqref{eq:RCom}.

In \cite{DzhLofwall2002}, it was proved that the free Novikov
algebra $\Nov \langle X\rangle $ generated by a set $X$ is
a subalgebra of the free differential commutative algebra
with respect to the operation~\eqref{eq:Gelfand}.
Moreover, it was shown in \cite{BCZ-2017} that every Novikov algebra
can be embedded into an appropriate
commutative algebra with a derivation.

One may generalize the relation between commutative algebras with a derivation and Novikov algebras as follows.
Let $\Var$ be a class of all algebras over a field $\Bbbk $ with one or more bilinear operations which is closed under direct products, subalgebras, and homomorphic images (HSP-class or {\em variety}).
By the classical Birkhoff's theorem, a variety consists of all algebras
that satisfy some family of identities.
We will assume that $\Var $ is defined by multi-linear identities (this is not a restriction if $\operatorname{char}\Bbbk =0$).
For every $A\in \Var $, let $\End(A)$ stand for the set
of all linear operators on the space~$A$.
Then a linear operator $d\in \End(A)$
is called a derivation if the analogue of the Leibniz rule
holds for every binary operation in~$A$.
The set of all derivations of a given algebra $A$ forms a subspace (even a Lie subalgebra) $\Der(A)$ of $\End(A)$.
The class of all pairs $(A,d)$, $A\in \Var$, $d\in \Der(A)$,
is also a variety denoted $\Var\Der$ defined by multi-linear identities.
Given $(A,d)\in \Var\Der$, denote by
$A^{(d)}$ the same space $A$ equipped with two bilinear operations
$\prec$, $\succ $ for each operation on~$A$:
\[
a\prec b = ad(b),\qquad a\succ b = d(a)b, \qquad a,b\in A.
\]
The class of all systems $A^{(d)}$, $(A,d)\in \Var\Der$,
is closed under direct products, so all homomorphic images of
all their subalgebras form a variety denoted $D\Var$,
called the {\em derived variety} of $\Var $ (see \cite{KSO2019}).
Alternatively, $D\Var $ consists of all algebras with duplicated family
of operations that satisfy all those identities that hold on all $A^{(d)}$
for all $(A,d)\in \Var\Der$.

For example, if $\Com $ is the variety of commutative (and associative) algebras then $D\Com = \Nov$ since
$x\prec y = y\succ x$. In general, the description of
$D\Var $ may be obtained in the language of operads and their
Manin products \cite{KSO2019}. Namely, if we identify the notations
for a variety and its governing operad \cite{GK-1994}
then the operad
$D\Var $ coincides with the Manin white product of operads
$\Var $ and $\Nov $.

As a corollary, the free algebra $D\Var \langle X\rangle $
in the variety $D\Var $ generated by a set $X$
is isomorphic to
the subalgebra in the free algebra in the variety
$\Var\Der$ generated by~$X$. However, it is not clear that
the following {\em embedding statement} holds in general:
 every
$D\Var$-algebra can be embedded into an appropriate
differential $\Var$-algebra (or $\Var\Der$-algebra).
In other words, the problem is to determine whether the
class of all subalgebras of all algebras $A^{(d)}$,
$(A,d)\in \Var\Der$,
is closed under homomorphic images.
Positive answers were obtained
for $\Var =\Com $ \cite{BCZ-2017}, $\Var=\Lie$ \cite{KSO2019}, $\Var=\Perm$ \cite{KS2022}, $\Var=\As$ \cite{SK-2022}.

In this paper, we derive a sufficient condition for a positive
answer to the embedding statement for a given $\Var $.
Namely, if the Manin white product $\Nov\circ \Var$ of the corresponding operads coincides with the Hadamard product
$\Nov\otimes \Var $ then the embedding statement holds for $\Var$.

We also find an example of a variety $\Var $ governed by
a binary quadratic operad such that the embedding
statement fails for $\Var$. It turns out that
the variety of Zinbiel algebras $\Zinb $ (also known as commutative dendriform algebras, pre-commutative algebras, dual Leibniz algebras, half-shuffle algebras) introduced in \cite{Loday} provides such an example:
there exists a $D\Zinb $-algebra which cannot be embedded into a Zinbiel algebra with a derivation.
To our knowledge, this is the first example of such a variety $\Var$ that an algebra from $D\Var$ does not embed into a~differential $\Var$-algebra.

The operad $\Zinb $ governing the variety of Zinbiel algebras
is a particular case of a general construction called {\em dendriform splitting} of an operad \cite{BBGN}.
For every binary operad $\Var $ (not necessarily quadratic,
see, e.g., \cite{GubKol2014}) there exists an operad
$\pre\Var $ governing the class of systems with duplicated
family of operations. The generic example of a $\pre\Var$ algebra may be obtained from a $\Var$-algebra
with a Rota--Baxter operator $R$ of weight zero
(see, e.g., \cite{LiGuo_RBABook}).
If $A\in \Var$ and $R\colon A\to A$ is such an operator then
$(A,\vdash, \dashv)$, where
\[
a\vdash b = R(a)b,\qquad a\dashv b = aR(b), \qquad a,b\in A,
\]
is a $\pre\Var $-algebra.
In this context,
$\pre\Com =\Zinb$ (since $a\vdash b = b\dashv a$),
$\pre\Lie$ is the classical variety of left-symmetric algebras (relative to $\vdash $), $\pre\As$ is exactly the variety of dendriform algebras \cite{Loday}.

The theory of pre-algebras and relations between them is similar
in many aspects to the theory of ``ordinary'' algebras.
For example, every $\pre\As $-algebra with respect to the ``dendriform commutator'' $a\vdash b - b\dashv a$ is a $\pre\Lie$-algebra. The properties of the corresponding left adjoint functor (universal envelope) are close to what we have
for ordinary Lie algebras
\cite{DotsTamaroff, Gubarev}.
The class of pre-Novikov algebras has been recently studied in \cite{Hong2023}: it coincides with $D\Zinb $.
Therefore, our results show that the embedding statement
cannot be transferred from ordinary algebras to pre-algebras.

\section{Derived algebras and Manin white product of binary operads}

The details about (symmetric) operads may be found, for example,
in \cite{Bremner-Dotsenko}.
For an operad $\mathcal P$ denote
$\mathcal P(n)$ the linear space (over a base field $\Bbbk $)
of degree $n$ elements of $\mathcal P$, the action of a~permutation $\sigma \in S_n$
on an element $f\in \mathcal P(n)$ is denoted $f^\sigma $,
let $\id \in \mathcal P(1) $ stand for the identity element,
and the composition rule
\[
\mathcal P(n)\otimes \mathcal P(m_1)\otimes \dots \otimes \mathcal P(m_n) \to \mathcal P(m_1+\dots + m_n)
\]
is denoted $\gamma^{m_1+\dots+m_n}_{m_1,\dots , m_n}$.

Recall that an operad $\mathcal P$ is said to be binary, if
$\mathcal P(1)=\Bbbk \mathrm{id}$ and the entire
$\mathcal P$ is generated (as an operad) by its degree 2 space
$\mathcal P(2)$.

Let us fix a binary operad $\mathcal P$.
For every
linear space $A$, consider an operad
$\End_A$
with $\End_A(n)=\mathrm{End}\,(A^{\otimes n}, A)$.
A~morphism of operads
$\mathcal P \to \End_A$ defines an algebra structure
on $A$ with a set of binary operations corresponding
to the generators of $\mathcal P$ from $\mathcal P(2)$.
The class of all such algebras is a variety
 of $\mathcal P$-algebras
defined by multi-linear identities corresponding
to the defining relations of the operad~$\mathcal P$.

Conversely, every variety of algebras with binary operations
defined by multi-linear identities gives rise to
an operad $\mathcal P$ constructed in such a way that
$\mathcal P(n)$ is the space of multi-linear elements
of degree~$n$ in the variables $x_1,\ldots, x_n$
of the free algebra in this variety generated by the countable set
$\{x_1,x_2,\ldots \}$
(see, e.g., \cite[Section~1.3.5]{Kol2008_smz} for details).
In particular, $\id\in \mathcal P(1)$
is presented by the element $x_1$.
Then the variety under consideration consists
exactly of all $\mathcal P$-algebras, in other words, it is
{\em governed} by the operad~$\mathcal P$.

\begin{Example}
Let $\mu,\nu \in \mathcal P(2)$. Suppose we have identified
$\mu $ with $x_1x_2$ and $\nu $ with $x_1*x_2$.
Then{\samepage
\begin{gather*}
\begin{split}
&\gamma^3_{1,2}(\mu, \id, \nu )
= \gamma^3_{1,2} (x_1x_2, x_1, x_1*x_2)
= x_1(x_2*x_3),
\\
&\gamma^4_{2,2}\bigl(\nu^{(12)}, \mu, \nu \bigr)
= \gamma^4_{2,2}(x_2*x_1, x_1x_2, x_1*x_2)
= (x_3*x_4)*(x_1x_2),
\end{split}
\end{gather*}
and so on.}
\end{Example}

We will not distinguish notations for an operad $\mathcal P$
and for the corresponding variety of $\mathcal P$-algebras.

\begin{Example}
Suppose $\mathcal F_2$ is the free binary operad with
$\mathcal F_2(2) \simeq \Bbbk S_2$ (as symmetric modules).
Then the class of $\mathcal F_2$-algebras
coincides with the variety of all nonassociative
algebras with one non-symmetric binary operation,
i.e., $\mathcal F_2(n)$ may be identified with the
linear span of all bracketed monomials
$\bigl(x_{\sigma(1)}\cdots x_{\sigma(n)}\bigr)$, $\sigma \in S_n$,
so $\dim \mathcal F_2(n) = n! C_{n-1}$, where $C_k$ is the $k$th
Catalan number.
\end{Example}

If $\mathcal P$ is an operad governing a variety
of algebras with one binary operation then
$\mathcal P$ is a~homomorphic image of $\mathcal F_2$.
If the kernel of the morphism is generated (as an operadic ideal) by elements from $\mathcal F_2(3)$ then
the operad $\mathcal P$ is said to be {\em quadratic}.
The same definition works for operads with more than one generator.

\begin{Example}\label{exmp:Zinbiel}
Let $\Zinb$ stand for the variety of algebras with one multiplication
satisfying the identity
\begin{equation}\label{eq:Zinbiel}
(x_1\cdot x_2)\cdot x_3
= x_1\cdot (x_2\cdot x_3 + x_3\cdot x_2),
\end{equation}
known as the Zinbiel identity \cite{Loday}.
Then the space $\Zinb(n)$, $n\ge 1$, is spanned by
linearly independent monomials
\[
\bigl( x_{\sigma^{-1}(1)}\cdot \bigl(x_{\sigma^{-1}(2)}\cdot \bigl(\cdots \bigl(x_{\sigma^{-1}(n-1)}\cdot x_{\sigma^{-1}(n)}\bigr)\cdots \bigr)\bigr) \bigr)
=
[x_{1}x_{2}\cdots x_{n-1}x_{n}]^\sigma ,
\qquad \sigma \in S_n,
\]
so $\dim \Zinb(n)=n!$.

The product of two right-normed words in a Zinbiel algebra
can be expressed via shuffle permutations $S_{n,m}\subset S_{n+m}$: if $u= x_1\cdots x_n$, $v = x_{n+1}\cdots x_{n+m}$
then
\[
[u]\cdot [v]
=\sum\limits_{\sigma\in S_{n,m}\colon \sigma (1)=1}
[uv]^\sigma .
\]
The new product $[u]\cdot [v] + [v]\cdot [u]$
on a Zinbiel algebra
is associative and commutative (it is known as the {\em shuffle product}
of words).
\end{Example}

Suppose $\mathcal P_1$ and $\mathcal P_2$ are two binary operads.
Then the {\em Hadamard product} of $\mathcal P_1$ and $\mathcal P_2$ is the operad denoted
$\mathcal P= \mathcal P_1\otimes \mathcal P_2$ such that
$\mathcal P(n)= \mathcal P_1(n)\otimes \mathcal P_2(n)$, $n\ge 1$,
the action of $S_n$ on $\mathcal P(n)$ and the composition rule are defined in
the componentwize way.

The operad $\mathcal P_1\otimes \mathcal P_2$ may not be a binary one (in this paper, we will deal
with such an example below). The sub-operad of $\mathcal P_1\otimes \mathcal P_2$ generated
by $\mathcal P_1(2)\otimes \mathcal P_2(2)$ is
called the {\em Manin white product} of $\mathcal P_1$ and $\mathcal P_2$ denoted by
$\mathcal P_1\circ\mathcal P_2$.
(For some operads $\mathcal P_1$, it may happen that
$\mathcal P_1\circ \mathcal P_2 = \mathcal P_1\otimes \mathcal P_2$
for all $\mathcal P_2$; in \cite{Vallette2008}, all such operads were described.)

If $\mathcal P_1$ and $\mathcal P_2$ are quadratic binary operads then so is
$\mathcal P_1\circ \mathcal P_2$. The defining relations of the last operad can be found
as follows \cite{GK-1994}. Suppose $R_i\subseteq \mathcal P_i(3)$,
$i=1,2$, are the spaces of defining relations of
$\mathcal P_i$ presented in the form of multi-linear identities in $x_1$, $x_2$, $x_3$.
Consider the space~$E(3)$ spanned by all possible compositions of degree~3 of operations from
$\mathcal P_1(2)\otimes \mathcal P_2(2)$.
Then the defining relations of $\mathcal P_1\circ \mathcal P_2$
form the space $E(3)\cap (\mathcal P_1(3)\otimes R_2 + R_1\otimes \mathcal P_2(3))$.

\begin{Example}\label{exmp:Nov-Zinb}
Let $\mathcal P_1=\Nov$, $\mathcal P_2=\Zinb$.
Then $\mathcal P_1(2)\otimes \mathcal P_2(2)$ is spanned by four elements
\begin{alignat*}{3}
& x_1\prec x_2 = x_1x_2\otimes x_1x_2,
 \qquad &&
 x_1\succ x_2 = x_2x_1\otimes x_1x_2, &
 \\
& x_2\prec x_1 = x_2x_1\otimes x_2x_1,
 \qquad &&
 x_2\succ x_1 = x_1x_2\otimes x_2x_1. &
\end{alignat*}
In order to find $E(3)$, calculate all monomials of degree 3 in $x_1$, $x_2$, $x_3$
with operations $\succ$, $\prec$. For example,
\begin{align*}
 (x_1\succ x_3)\prec x_2 &= \gamma^3_{1,2} (x_2\prec x_1, \id, x_1\succ x_2)^{(12)}
 \\
 &=
 \gamma^3_{1,2}(x_2x_1, \id, x_2x_1)^{(12)}\otimes \gamma^3_{1,2}(x_2x_1, \id, x_1x_2)^{(12)}
 \\
& =
(x_3x_2)x_1^{(12)}\otimes (x_2x_3)x_1^{(12)}
=
(x_3x_1)x_2\otimes (x_2x_3)x_1 \in \Nov(3)\otimes \Zinb(3).
\end{align*}
In the same way, the expressions for all 48 monomials may be calculated
in $\Nov(3)\otimes \Zinb(3)$. In order to get defining relations
of $\Nov\circ \Zinb$ it is enough to find those linear combinations
of these monomials that are zero in $\Nov(3)\otimes \Zinb(3)$.
This is a routine problem of linear algebra.
As a result, we obtain the following identities:
\begin{gather}
(x_1\prec x_2)\prec x_3=(x_1\prec x_3)\prec x_2,\nonumber\\
x_1\succ (x_2\succ x_3) = (x_1\succ x_3)\prec x_2-x_1\succ (x_3\prec x_2),\nonumber\\
(x_1\succ x_2)\succ x_3=(x_1\succ x_3)\succ x_2 +(x_1\succ x_3)\prec x_2-(x_1\succ x_2)\prec x_3, \label{eq:pre-Novikov}\\
(x_1\prec x_2)\succ x_3=x_1\prec (x_2\succ x_3)+x_1\prec (x_3\prec x_2)
 +(x_1\succ x_3)\prec x_2-(x_1\prec x_3)\prec x_2.\nonumber
\end{gather}
\end{Example}

\begin{Remark}
Novikov algebras are closely
related to conformal algebras in the sense
of \cite{Kac:VAforbeginners}.
Namely, if $V$ is a Novikov algebra then
the free $\Bbbk [\partial ]$-module $C=\Bbbk[\partial ]\otimes V$
equipped with a~sesqui-linear $\lambda $-product
\[
[u{}_{(\lambda )} v] = \partial (vu) + \lambda (vu + vu),
\qquad u,v\in V,
\]
is a Lie conformal algebra \cite{Xu1999}. The reason of this
relation is that the expression for the generalised Poisson
bracket in \cite{BN1985} has the same form as the expression
for the commutator of local chiral fields in \cite{Kac:VAforbeginners}.
\end{Remark}

\begin{Remark}\label{rem:Conformal}
The variety governed by the operad $\Nov\circ \Zinb $
is also related to conformal algebras.
It is straightforward to check that if $V$ is an algebra over a field $\Bbbk $, $\operatorname{char}\Bbbk =0$, with two
operations~$\prec$,~$\succ $ satisfying the identities
\eqref{eq:pre-Novikov}
then the free
$\Bbbk [\partial ]$-module $C=\Bbbk[\partial ]\otimes V$
equipped with a~sesqui-linear $\lambda $-product
\[
\bigl(u{}_{(\lambda )} v\bigr) = \partial (v\prec u) + \lambda (v\succ u + v\prec u),
\qquad u,v\in V,
\]
is a left-symmetric conformal algebra \cite{HongLi2015}.
We will explain this relation in the last section.
\end{Remark}

Let $\Var $ be a binary operad, we use the same notation for the
corresponding variety of algebras.
Given an algebra $A$ in $\Var $, denote by $\Der(A) $
the set of all derivations of $A$. Recall that
a derivation of $A$ is a linear map $d\colon A\to A$
such that
\[
 d(\mu(a,b)) = \mu(d(a),b) + \mu(a,d(b)), \qquad a,b\in A,
\]
for all operations $\mu $ from $\Var (2)$.
For a derivation $d$ of an algebra $A$, denote by
$A^{(d)}$ the linear space $A$ equipped with {\em derived}
operations
\begin{equation}\label{eq:Der-op}
 \mu^{\succ }(a,b) = \mu(d(a),b), \quad \mu^{\prec }(a,b) = \mu(a,d(b)), \qquad a,b\in A,
\end{equation}
for all $\mu $ in $\Var (2)$.
The variety generated by the class of systems $A^{(d)}$ for all $A\in \Var$ and $d\in \Der(A)$
is denoted by $D\Var$, the {\em derived variety} of $\Var $.
Obviously, the class of all such $A^{(d)}$ is closed under direct products,
so, in order to get $D\Var$, one has to consider all homomorphic images of all subalgebras of these $A^{(d)}$.
In many cases, it is enough to consider just subalgebras.
The problem is to decide whether homomorphic images are actually needed.

\begin{Theorem}[\cite{KSO2019}]
For a binary operad $\Var $, the variety $D\Var $ is governed
by the operad ${\Nov\circ \Var}$.
\end{Theorem}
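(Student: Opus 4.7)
The plan is to identify the operad governing $D\Var$ with the Manin white product $\Nov\circ\Var$ in both directions: first, show that the defining identities of $\Nov\circ\Var$ hold on every derived algebra $A^{(d)}$, and second, show that no further multilinear identity is satisfied by all such $A^{(d)}$. For each $(A,d)\in\Var\Der$, I would construct an operad morphism $\Phi\colon \Nov\circ\Var\to \End_{A^{(d)}}$ by sending the generators in $\Nov(2)\otimes\Var(2)$ to the derived operations from~\eqref{eq:Der-op}: identifying $\Nov(2)\cong\Bbbk S_2$ with basis $\{\nu_\prec,\nu_\succ\}$ corresponding to the two Novikov operations built from a commutative multiplication and its derivation, I would send $\nu_\prec\otimes\mu$ to~$\mu^\prec$ and $\nu_\succ\otimes\mu$ to~$\mu^\succ$.

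The well-definedness of $\Phi$ reduces to checking that the defining relations of $\Nov\circ\Var$, which by the procedure recalled in Example~\ref{exmp:Nov-Zinb} sit in the space $E(3)\cap(\Nov(3)\otimes R_\Var + R_\Nov\otimes\Var(3))$, vanish on $A^{(d)}$. Relations with component in $\Nov(3)\otimes R_\Var$ vanish trivially, because the $\Var(3)$-factor is a genuine identity of $\Var$ applied to elements of $A$. Relations with component in $R_\Nov\otimes\Var(3)$ require a Leibniz-rule expansion: every ternary expression in $\mu^{\succ},\mu^{\prec}$ built from a fixed underlying $\mu\in\Var(2)$ unfolds into a linear combination of $\Var$-ternary monomials whose combinatorial coefficients depend only on the pattern of derivatives and compositions, not on~$\mu$. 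Since these coefficients are the same as in the prototypical case $\Var=\Com$, where the Novikov identities are known to hold on $A^{(d)}$, the cancellations transfer verbatim to arbitrary $\mu$.

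The harder direction is showing that $\Phi$, applied to the universal free situation $A=\Var\Der\langle X\rangle$, induces an isomorphism onto the subalgebra generated by $X$ under the derived operations; equivalently, that no additional identities are hidden. My plan is to propose a candidate basis for $(\Nov\circ\Var)(n)$ indexed by planar binary trees with internal nodes carrying a $\prec/\succ$ tag together with an element of $\Var(2)$, and to show that $\Phi$ sends this family to a linearly independent set inside the multi-graded piece of $\Var\Der\langle X\rangle$ where each leaf~$x_i$ carries a $d$-degree in $\{0,1\}$ and the total $d$-degree equals $n-1$. The main obstacle is exactly this linear independence, since naive tree decorations overcount: distinct decorations can collapse under the Leibniz rule. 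Overcoming it requires either a Hilbert series argument for the white product (when $\Var$ is Koszul, so that the generating function of $\Nov\circ\Var$ admits a clean formula), or a Gr\"obner--Shirshov normal form on $\Nov\circ\Var$ matched against a rewriting system on $\Var\Der$-monomials; this is the route pursued in \cite{KSO2019}.
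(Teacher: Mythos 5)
There is a genuine gap in the second half of your argument, and you have in fact located it yourself: the linear independence of the decorated-tree family is exactly what fails (those trees span the \emph{free} operad on $\Nov(2)\otimes\Var(2)$, not the white product), and neither of your fallback routes covers the general case --- the Hilbert-series argument requires Koszulity, which is not assumed (the theorem is for arbitrary binary operads, including non-quadratic ones such as $\Jord$ discussed in the paper), while ``a Gr\"obner--Shirshov normal form on $\Nov\circ\Var$'' is precisely the unknown you would have to produce. The missing idea, which makes the independence question disappear entirely, is to identify the \emph{whole} multilinear weight-$(-1)$ component of the free differential $\Var$-algebra $\Var\bigl\langle X^{(\omega)}\bigr\rangle$ in $x_1,\dots,x_n$ with $\Nov(n)\otimes\Var(n)$. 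Indeed, such an element is a combination of $\mu\bigl(x_1^{(s_1)},\dots,x_n^{(s_n)}\bigr)$ with $\mu\in\Var(n)$ and $\sum_i s_i=n-1$; the commutative differential monomials $x_1^{(s_1)}\cdots x_n^{(s_n)}$ of weight $-1$ form exactly $\Nov(n)$ by the Dzhumadil'daev--L\"ofwall theorem (which you never invoke); and the resulting bijection $x_1^{(s_1)}\cdots x_n^{(s_n)}\otimes(x_{i_1}\cdots x_{i_n})\leftrightarrow \bigl(x_{i_1}^{(s_{i_1})}\cdots x_{i_n}^{(s_{i_n})}\bigr)$ is compatible with operadic composition and the $S_n$-actions --- this is the correspondence written out in the unnumbered Remark following Lemma~\ref{lem:Weight-criterion}. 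Once this is in place, $D\Var(n)$, being by definition the span of compositions of the derived binary operations inside that weight-$(-1)$ component, is tautologically the degree-$n$ part of the suboperad of $\Nov\otimes\Var$ generated by $\Nov(2)\otimes\Var(2)$, i.e.\ $(\Nov\circ\Var)(n)$ by the very definition of the Manin white product. No basis of either side is required, and this is the route of \cite{KSO2019} (the paper itself only quotes the result).

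Your first direction also needs tightening, although it is repaired by the same identification. A defining relation of $\Nov\circ\Var$ is an element of $E(3)\cap(\Nov(3)\otimes R_{\Var}+R_{\Nov}\otimes\Var(3))$; such an element need not split as a sum of a term lying in $E(3)\cap\bigl(\Nov(3)\otimes R_{\Var}\bigr)$ and one lying in $E(3)\cap\bigl(R_{\Nov}\otimes\Var(3)\bigr)$, so treating the two ``components'' by separate arguments is not available, and an individual tensor $r\otimes v$ with $r\in R_{\Nov}$ generally does not vanish on $A^{(d)}$ (for $\Var=\As$ one has $(x_1\prec x_2)\prec x_3=x_1d(x_2)d(x_3)\ne x_1d(x_3)d(x_2)$, so right commutativity does not ``transfer verbatim'' from $\Com$: its cancellation there uses commutativity of the underlying product, which is exactly what a general $\mu$ lacks). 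The correct statement is that evaluation on $A^{(d)}$ factors through the weight-$(-1)$ component, hence through $\Nov(3)\otimes\Var(3)$, so every element of $E(3)$ that maps to zero there acts by zero --- which disposes of all relations at once without any case analysis.
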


For example,
all relations that hold
on every Zinbiel algebra with a derivation relative to
the operations $a\succ b = d(a)b$, $a\prec b = ad(b)$
follow from the identities~\eqref{eq:pre-Novikov}.

If $\Var = \Com$ is the variety of associative and commutative algebras
then $D\Var = \Nov$, as follows from the construction of the free Novikov
algebra~\cite{DzhLofwall2002}. Here we have to mention
that commutativity implies $a\succ b = b\prec a$ in every
algebra from $D\Com $.

If $\Var = \Lie $ then the algebras from $D\Var $ form exactly the class of
all $\mathcal F_2$-algebras with one binary operation $a\succ b = -b\prec a$ \cite{KSO2019}.
Note that $\dim \Nov(n) = \binom{2n-2}{n-1}$ \cite{Dzhumad-codimgrowth}, and $\Lie(n)=(n-1)!$.
Hence,
$\dim (\Nov\otimes \Lie)(n) = \frac{(2n-2)!}{(n-1)!}$ which is equal
to $n!C_{n-1}$, where $C_n$ is the $n$th Catalan number.
The number $n!C_{n-1}$ coincides with the $n$th dimension
of the operad~$\mathcal F_2$.
Hence,
$\Nov\circ \Lie = \Nov \otimes \Lie $.

Suppose $\Var $ is a binary operad, $D\Var = \Nov\circ \Var$,
$D\Var\langle X\rangle $ is the free $D\Var$-algebra generated by a countable set
$X=\{x_1,x_2,\dots \}$. Denote by $F=\Var\Der\langle X,d\rangle $ the
free differential $\Var $-algebra generated by $X$ with one derivation $d$.
Then there exists a homomorphism $\tau \colon D\Var\langle X\rangle \to F^{(d)}$
sending $X$ to $X$ identically.
An element from $\ker \tau $ is an identity that holds on all $\Var $-algebras
with a derivation relative to the derived operations \eqref{eq:Der-op}.
Hence, $\tau $ is injective, i.e., the free $D\Var$-algebra
can be embedded into the free differential $\Var $-algebra.

The next question is whether every $D\Var$-algebra can be embedded into an
appropriate differential $\Var $-algebra.
This is the same as to decide if the class of all subalgebras of $A^{(d)}$,
$(A,d)\in \Var\Der$, is closed under homomorphisms.
The answer is positive
for $\Var = \Com $ \cite{BCZ-2017}, $\Lie $ \cite{KSO2019}, $\Perm $ \cite{KS2022}, and $\As $ \cite{SK-2022}.
In the following sections we derive a sufficient condition
for $\Var $ to guarantee a positive answer. This condition is
not necessary, but we find an example when the answer is negative.

\section{The weight criterion and special derived algebras}

Let $\Var $ be a binary operad.
An algebra $V$ with two binary operations $\prec$, $\succ $ from the variety
$D\Var $ is called {\em special} if it can be embedded into
a $\Var $-algebra $A$ with a derivation $d$ such that
$u\prec v = ud(v)$ and $u\succ v = d(u)v$ in $A$ for all
$u,v\in V$.
The class of all $\Var $-algebras with a derivation is a variety
since it is defined by identities. The free differential $\Var $-algebra
$\Var\Der\langle X,d\rangle $
generated by a set $X$ is isomorphic as a $\Var$-algebra
to the free $\Var$-algebra $\Var\bigl\langle X^{(\omega )}\bigr\rangle $
generated by the set
\[
X^{(\omega )} = \bigl\{x^{(n)} \mid x\in X,\, n\in \mathbb Z_+ \bigr\},
\]
with the derivation $d$ defined by $d\bigl(x^{(n)}\bigr) = x^{(n+1)}$,
$x\in X$, $n\in \mathbb Z_+$.

For a nonassociative monomial $u\in X^{(\omega )}$ define its
weight $\wt(u)\in \mathbb Z$ as follows. For a single letter
$u=x^{(n)}$, set $\wt(u)=n-1$. If $u=u_1u_2$ then
$\wt(u) = \wt(u_1)+\wt(u_2)$.
Since the defining relations of $\Var\bigl\langle X^{(\omega )}\bigr\rangle $
are weight-homogeneous, we may define the weight function
on $\Var\Der\langle X,d\rangle $.
Note that if $f\in \Var\bigl\langle X^{(\omega )}\bigr\rangle $
is a weight-homogeneous polynomial then
$\wt d(f) = \wt(f)+1$.

\begin{Lemma}\label{lem:Weight-criterion}
 Let $\Var $ be a binary operad such that
 $\Nov\circ \Var = \Nov \otimes \Var $.
Then for every set~$X$ an element $f\in \Var\bigl\langle X^{(\omega )}\bigr\rangle $
belongs to $D\Var \langle X\rangle $ if and only if
$\wt (f)=-1$.
\end{Lemma}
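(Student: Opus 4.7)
The ``only if'' direction is a direct induction on the depth of a $D\Var$-monomial. Each generator $x\in X$ is identified with $x^{(0)}\in X^{(\omega)}$, which has weight $-1$. For any $\mu\in \Var(2)$ the derived operations \eqref{eq:Der-op} satisfy $\wt(\mu^{\succ}(a,b))=\wt(d(a))+\wt(b)=\wt(a)+\wt(b)+1$ and likewise for $\mu^{\prec}$, using $\wt(d(u))=\wt(u)+1$ together with multiplicativity $\wt(\mu(u,v))=\wt(u)+\wt(v)$ in $\Var\bigl\langle X^{(\omega)}\bigr\rangle$. Hence if $\wt(a)=\wt(b)=-1$, every derived product of $a$ and $b$ again has weight $-1$, and the desired inclusion follows inductively.

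For the converse, the strategy is a dimension count in each multilinear component, combined with the hypothesis. Work inside the part of $\Var\bigl\langle X^{(\omega)}\bigr\rangle$ multilinear in $x_1,\dots,x_n$, that is, where each $x_i$ occurs exactly once (at some derivative level). Since $\Var\bigl\langle X^{(\omega)}\bigr\rangle$ is freely generated as a $\Var$-algebra by the disjoint set $X^{(\omega)}=\{x^{(k)}\}$, for every tuple $(n_1,\dots,n_n)\in \mathbb Z_+^n$ the subspace multilinear in $d^{n_1}(x_1),\dots,d^{n_n}(x_n)$ has dimension $\dim\Var(n)$, and subspaces attached to distinct tuples are linearly independent. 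A weight bookkeeping gives weight $\sum(n_i-1)$, so the contribution to weight~$-1$ comes precisely from tuples with $\sum n_i=n-1$, of which there are $\binom{2n-2}{n-1}=\dim\Nov(n)$. Consequently the multilinear weight-$(-1)$ subspace has total dimension $\dim\Nov(n)\cdot\dim\Var(n)=\dim(\Nov\otimes\Var)(n)$.

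It remains to invoke the hypothesis $\Nov\circ\Var=\Nov\otimes\Var$ together with the injectivity of $\tau$ recorded just before the lemma: the multilinear degree-$n$ component of $D\Var\langle X\rangle$ has dimension $\dim(\Nov\circ\Var)(n)=\dim(\Nov\otimes\Var)(n)$, which exactly matches the dimension of the weight-$(-1)$ subspace into which $\tau$ maps it. Injectivity forces this embedding to be surjective onto that subspace, establishing the lemma on the multilinear component. An arbitrary weight-$(-1)$ element $f\in\Var\bigl\langle X^{(\omega)}\bigr\rangle$ is then handled by decomposing into multi-homogeneous components (weight is preserved) and reducing to the multilinear case by standard polarization, which is legitimate because $D\Var$ is governed by multi-linear identities. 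The main technical point is the linear independence of the subspaces indexed by distinct tuples $(n_1,\dots,n_n)$, which rests on $X^{(\omega)}$ being a free generating set of $\Var\bigl\langle X^{(\omega)}\bigr\rangle$ as a plain $\Var$-algebra; once this is in place, the remaining steps reduce to the dimension arithmetic above.
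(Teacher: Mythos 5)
Your ``only if'' direction is the same as the paper's (a routine weight computation by induction). For the ``if'' direction you take a genuinely different, non-constructive route. The paper argues monomial by monomial: to a weight-$(-1)$ monomial $u=\bigl(x_{i_1}^{(s_1)}\cdots x_{i_n}^{(s_n)}\bigr)$ it attaches the explicit element $[u]=x_1^{(s_1)}\cdots x_n^{(s_n)}\otimes (x_1\cdots x_n)\in \Nov(n)\otimes\Var(n)$ (using the realization of $\Nov(n)$ inside the free differential commutative algebra from \cite{DzhLofwall2002}), invokes the hypothesis to place $[u]$ in $(\Nov\circ\Var)(n)$, reads off a $\prec,\succ$-expression, and specializes $x_j\mapsto x_{i_j}$. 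You instead compare dimensions: the multilinear weight-$(-1)$ subspace of $\Var\bigl\langle X^{(\omega)}\bigr\rangle$ decomposes over the $\binom{2n-2}{n-1}$ tuples $(n_1,\dots,n_n)$ with $\sum n_i=n-1$ into summands of dimension $\dim\Var(n)$ each, giving total dimension $\dim(\Nov\otimes\Var)(n)$, which under the hypothesis equals $\dim(\Nov\circ\Var)(n)=\dim D\Var(n)$; since $\tau$ is injective and weight- and multidegree-preserving, it must be onto that subspace. This is correct and rests on the same external inputs (the theorem $D\Var=\Nov\circ\Var$ of \cite{KSO2019} and the size of $\Nov(n)$ from \cite{Dzhumad-codimgrowth}); what the paper's version buys in exchange is an actual algorithm producing the $\prec,\succ$-expression (cf.\ the Example following the lemma). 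One terminological caveat: your final reduction from weight-homogeneous to multilinear elements is sound, but ``polarization'' is the wrong word and would, in the strict sense, require invertible factorials. What is actually needed (and what the paper does) is the characteristic-free restitution direction: lift each multihomogeneous monomial to a multilinear one in distinct letters, apply the multilinear case, and push the resulting expression back through the differential substitution $x_j\mapsto x_{i_j}$, which is a morphism of differential $\Var$-algebras and hence preserves the subalgebra $D\Var\langle X\rangle$.
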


\begin{proof}
The ``only if'' part of the statement does not depend on the particular operad $\Var $.
Indeed,
every formal expression in the variables $X$ relative to binary operations $\prec$
and $\succ $ turns into a weight-homogeneous polynomial of weight $-1$ in $\Var\bigl\langle X^{(\omega )}\bigr\rangle $.

For the ``if'' part, assume $u$ is a monomial of weight $-1$ in the variables $X^{(\omega )}$.
In the generic form,
\[
 u = \bigl(x_{i_1}^{(s_1)}\cdots x_{i_n}^{(s_n)}\bigr), \qquad x_{i_j}\in X,\quad s_j\ge 0,
\]
with some bracketing. Here $s_1+\dots +s_n = n-1$.
Consider the element
\[
[u] = x_{1}^{(s_1)}\cdots x_{n}^{(s_n)} \otimes (x_1\cdots x_n) \in \Nov(n)\otimes \Var(n).
\]
Here the first tensor factor is a differential commutative monomial of degree $n$ and of weight~$-1$
which belongs to $\Nov(n)$ by \cite{DzhLofwall2002}. In the second factor, we put the
nonassociative multi-linear word
obtained from $u$ by removing all derivatives and consecutive
re-numeration of variables (the bracketing remains the same as in~$u$).

By the assumption, $[u]$ belongs to $(\Nov\circ \Var)(n)$, i.e., can be obtained
from $x_1\prec x_2$ and $x_1\succ x_2$ by compositions and symmetric groups actions.
Hence,
the monomial $\bigl(x_{1}^{(s_1)}\dots x_{n}^{(s_n)}\bigr)$ may be expressed in terms of
operations $\succ $ and $\prec$ on the variables $x_1,\dots,x_n\in X$
in the differential algebra $\Var\bigl\langle X^{(\omega )}\bigr\rangle $.
It remains to make the substitution
$x_{j}\to x_{i_j}$ to get the desired expression for $u$ in $D\Var \langle X\rangle $.
\end{proof}

\begin{Example}
For example, if $u = (x_1 (x_1''x_2 ) ) \in \Lie\bigl\langle X^{(\omega )}\bigr\rangle $
then $[u] = x_1x_2''x_3 \otimes (x_1(x_2x_3))$.
It is straightforward to check that
$[u] = x_1\prec (x_2\succ x_3) - x_2\succ (x_1\prec x_3) - (x_1\prec x_2)\prec x_3$,
where the monomials of degree 3 represent
compositions of $x_1\prec x_2$, $x_2\prec x_1$, $x_1\succ x_2$, $x_2\succ x_1$, and $\id = x_1\otimes x_1$ as in Example~\ref{exmp:Nov-Zinb}.
Hence,
$u = x_1\prec (x_1\succ x_2) - x_1\succ (x_1\prec x_2) - (x_1\prec x_1)\prec x_2$.
\end{Example}

\begin{Remark}
The condition
\[
D\Var \langle X\rangle = \bigl\{f\in \Var\bigl\langle X^{(\omega )}\bigr\rangle \mid \wt (f)=-1 \bigr\}
\]
for a binary operad $\Var $
implies $\Nov \circ \Var =\Nov \otimes \Var $.
Indeed, the one-to-one correspondence
\[
x_1^{(s_1)}\cdots x_n^{(s_n)}\otimes (x_{i_1}\cdots x_{i_n}) \leftrightarrow
\bigl(x_{i_1}^{(s_{i_1})} \cdots x_{i_n}^{(s_{i_n})} \bigr),
\qquad
\sum_i s_i = n-1,
\]
between $(\Nov\otimes \Var)(n)$ and $D\Var (n)$
preserves compositions and symmetric groups
actions. Hence, if
$\bigl(x_{i_1}^{(s_{i_1})} \cdots x_{i_n}^{(s_{i_n})} \bigr)$
may be expressed via $X$ in terms of operations
$\mu^\succ$, $\mu^\prec $ ($\mu \in \Var(2)$)
then
$x_1^{(s_1)}\cdots x_n^{(s_n)}\otimes (x_{i_1}\cdots x_{i_n}) \in (\Nov \circ \Var )(n)$.
\end{Remark}

\begin{Proposition}\label{prop:DNovikov}
The operad $\Var = \Nov $ satisfies the conditions of Lemma~$\ref{lem:Weight-criterion}$, i.e.,
$\Nov\circ \Nov = \Nov\otimes \Nov$.
\end{Proposition}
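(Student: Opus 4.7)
By the Remark following Lemma~\ref{lem:Weight-criterion}, the equality $\Nov\circ\Nov=\Nov\otimes\Nov$ is equivalent to the weight criterion for $\Var=\Nov$: every weight-$(-1)$ element of $\Nov\langle X^{(\omega)}\rangle$ lies in $D\Nov\langle X\rangle$. The plan is to prove this criterion by a dimension count combined with a triangular construction of derived expressions.

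Applying the Dzhumadil'daev--L\"ofwall embedding \cite{DzhLofwall2002} to the set $Y=X^{(\omega)}$, I identify $\Nov\langle X^{(\omega)}\rangle$ as a subspace of the free commutative algebra on $X$ with two commuting derivations $\partial_1=d$ and $\partial_2$ (with Novikov product $a\cdot b=a\,\partial_2(b)$), namely the $\partial_2$-weight-$(-1)$ subspace. A basis of its multi-linear degree-$n$ part consists of monomials $\prod_i x_i^{(s_i,t_i)}$ (where $x_i^{(s,t)}=\partial_1^s\partial_2^t(x_i)$) with $\sum t_i=n-1$. The weight-$(-1)$ in $d$ subspace of $\Nov\langle X^{(\omega)}\rangle$ then corresponds to the subspace with additionally $\sum s_i=n-1$, so in multi-linear degree $n$ its basis has $\binom{2n-2}{n-1}^2$ elements. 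Since $\dim\Nov(n)=\binom{2n-2}{n-1}$, this matches $\dim(\Nov\otimes\Nov)(n)$. Via the embedding $\tau\colon D\Nov\langle X\rangle\hookrightarrow\Nov\langle X^{(\omega)}\rangle$ and the theorem that $\Nov\circ\Nov$ governs $D\Nov$, both $\dim(\Nov\circ\Nov)(n)$ and $\dim D\Nov(n)$ are bounded above by $\binom{2n-2}{n-1}^2$, and the proposition reduces to realising every basis monomial as the image of a derived expression on $x_1,\ldots,x_n$.

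For each pair of compositions $(s_i),(t_i)$ of $n-1$ into $n$ nonnegative parts, I would fix a term order on such monomials (e.g.\ lexicographic on $(t_1,s_1,\ldots,t_n,s_n)$) and construct a $\prec,\succ$-expression on $x_1,\ldots,x_n$ whose image under $a\prec b\mapsto a\,\partial_1\partial_2(b)$ and $a\succ b\mapsto \partial_1(a)\,\partial_2(b)$ has $\prod_i x_i^{(s_i,t_i)}$ as its leading term, with all lower-order terms already in the span of earlier-treated targets by induction. Triangularity then yields the required linearly independent family and forces equality of dimensions. The main obstacle is this construction: because $\partial_1$ distributes by the Leibniz rule, a $\prec,\succ$-tree generally evaluates to a sum of monomials, so the tree shape and node labelling must be chosen so that exactly one leading monomial realises the target. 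Left-combed trees $((\cdots(x_{\pi(1)}\diamond_1 x_{\pi(2)})\cdots)\diamond_{n-1}x_{\pi(n)})$ are natural candidates: a $\prec$-attachment multiplies by $x_{\pi(k+1)}^{(1,1)}$ cleanly without any Leibniz expansion of the already-formed subproduct, while each $\succ$-attachment produces a controlled two-term expansion, and an inductive matching of the $(\pi,\diamond)$-data with composition pairs $(s_i,t_i)$ should produce the desired triangular system.
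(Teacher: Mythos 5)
Your reduction and your dimension count are both sound: by the Remark after Lemma~\ref{lem:Weight-criterion} it suffices to show that every multi-linear weight-$(-1)$ element of $\Nov\bigl\langle X^{(\omega)}\bigr\rangle$ of degree $n$ lies in $D\Nov\langle X\rangle$, and since that component has dimension $\binom{2n-2}{n-1}^2=\dim(\Nov\otimes\Nov)(n)$ (via the two-derivation picture inside $\Com\bigl\langle X^{(\omega,\omega)}\bigr\rangle$, essentially as in Corollary~\ref{cor:NovDer}), it is indeed enough to realise each basis monomial $\prod_i x_i^{(s_i,t_i)}$ with $\sum_i s_i=\sum_i t_i=n-1$ inside the span of derived expressions. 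But that realisation is the whole content of the proposition, and you do not carry it out; worse, the candidate you float, left-combed trees, provably cannot do the job. In a left comb $((\cdots(x_{\pi(1)}\diamond_1 x_{\pi(2)})\cdots)\diamond_{n-1}x_{\pi(n)})$ the derivation $\partial_2$ is applied exactly once to each right argument and never to the accumulated left factor, so \emph{every} monomial in its expansion has $\partial_2$-exponent profile $t_{\pi(1)}=0$, $t_{\pi(i)}=1$ for $i\ge 2$. A target such as $x_1x_2x_3^{(2,2)}$ (which does occur, e.g., in the expansion of $x_1\prec(x_2\prec x_3)$) therefore cannot appear in any left comb at all, let alone as a leading term. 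The count says the same thing: there are only $n!\,2^{n-1}$ left combs against $\binom{2n-2}{n-1}^2$ targets, i.e., $24$ against $36$ already for $n=3$. So general tree shapes are unavoidable, and with them the Leibniz expansions are no longer ``controlled two-term'' ($\partial_1$ of a $k$-factor product has $k$ terms); this is precisely where an actual argument is required, and ``should produce the desired triangular system'' is not one.

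The paper closes this gap by a rewriting induction rather than a leading-term count: an arbitrary weight-$(-1)$ monomial is put into the Young-diagram normal form of the free Novikov algebra \cite{Dzhumad-codimgrowth, Dzhumad-Ismailov} with respect to an order in which undifferentiated letters precede differentiated ones; left symmetry and right commutativity are then used to manufacture a subword $a^{(k)}b$ or $ab^{(k)}$, which is absorbed into a new generator $(a\succ b)$ or $(a\prec b)$ at the cost of terms of smaller degree or with fewer ``naked'' letters, and a double induction finishes the proof. To salvage your strategy you would have to either admit all tree shapes and exhibit an explicit term order making the resulting family triangular, or switch to a rewriting argument of this kind.
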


\begin{proof}
In \cite{Dzhumad-codimgrowth}, a linear basis of the free Novikov algebra generated by an ordered set was described in terms of partitions and Young diagrams. To prove the assertion, we will use this basis, which consists of non-associative monomials constructed from Young diagrams with cells properly filled with generators, see \cite[Section~4]{Dzhumad-Ismailov} for details.

Suppose $h$ is a non-associative monomial of degree $n$
in $\Nov \bigl\langle X^{(\omega )}\bigr\rangle $
of weight $-1$.
The problem is to show that $h\in D\Nov\langle X\rangle $.
Let us proceed by induction both on the degree of $h$
\big(number of letters from $X^{(\omega )}$\big)
and on the number
of ``naked'' letters $x=x^{(0)}$, $x\in X$, that appear in~$h$.
\big(For brevity, letters of the form $x^{(n)}$ for $n>0$ are called ``derived''.\big)

If $\deg h=1$ then $h=x\in X \subset D\Nov\langle X\rangle $.
If $\deg h>1$ but $h$ contains only one ``naked'' letter $x\in X$
then all other letters are of the form $y_i'\in X'$ since
$\wt (h)=-1$. Then the identities of left symmetry \eqref{eq:LSym} and right commutativity \eqref{eq:RCom}
allow us to rewrite $h$ as a linear combination of nonassociative monomials with subwords of the form $xy'$ or $y'x$.
The latter may be processed in a way described below:
e.g., $xy'$ may be replaced with a new letter $(x\prec y)$
so that we get words of smaller degree in the extended alphabet.

{\it Case 1.} If the monomial $h$ has a subword
$a^{(k)}b$ or $ab^{(k)}$
for some $a,b\in X$ and $k\ge 1$,
then
we may transform $h$ to an expression in the extended alphabet (adding a new letter $a\succ b$ to~$X$)~as
\[
a^{(k)}b = (a\succ b)^{(k-1)} - \sum\limits_{s\ge 1} \binom{k-1}{s} a^{(k-s)}b^{(s)},
\]
or, similarly, for $ab^{(k)}$.
The expression in the right-hand side contains monomials either of smaller degree or with a smaller number of ``naked'' letters. Hence,
$h\in D\Nov \langle X\rangle $.

{\it Case 2.}
For the general case,
we need to recall the description of
a linear basis of the free Novikov algebra
(see \cite{Dzhumad-codimgrowth, Dzhumad-Ismailov, DzhLofwall2002}).
Suppose $X^{(\omega )}$ is linearly ordered in such a way
that every ``naked'' letter is smaller than every derived one.

Every element of $\Nov\bigl\langle X^{(\omega)}\bigr\rangle$
may be presented as a linear combination of non-associative words
of the form
\begin{equation}\label{eq:BasisNovikov}
h=(\cdots (W_1W_2) \cdots W_{k-1})W_k,
\end{equation}
where
$W_1 = a_{1,r_1+1}(a_{1,r_1}(\cdots (a_{1,2}a_{1,1})\cdots ))$,
$W_l = a_{l,r_l}(a_{l,r_l-1}(\cdots (a_{l,2}a_{l,1})\cdots ))$,
$l=2,\dots, k$,
$r_1\ge r_2\ge \dots \ge r_k$,
$a_{i,j} \in X^{(\omega )}$.
The letters are ordered in such a way that the following conditions hold:
\begin{itemize}\itemsep=0pt
\item If $r_i=r_{i+1}$ then $a_{i,1}\ge a_{i+1,1}$ for $i=1,\dots, k-1$;
\item
$a_{1,r_1+1}\ge a_{1,r_1}\ge \dots \ge a_{1,2}\ge a_{2,r_2}\ge \dots \ge a_{2,2}\ge \dots \ge a_{k-1,2}\ge a_{k,r_k}\ge \dots \ge a_{k,2}$.
\end{itemize}

In particular, if at least one of the words $W=W_l$ contains both ``naked'' and derived letters then there are two options:
(i) the last letter $a_{l,1}$ is a derived one; (ii) $a_{l,1}$ is ``naked''.
In the first case, the final subword $(a_{l,2}a_{l,1})$ of $W$ is of the form considered in Case~1 since $a_{l,2}$ has to be ``naked'' due to the choice of order on $X^{(\omega )}$. In the second case, we may find a suffix of $W$ which is of the following form:
\[
y^{(n)}(x_1(x_2\cdots (x_{s-1}x_s)\cdots )),\qquad x_i,y\in X,\quad n>0.
\]
An easy induction on $s\ge 1$ shows that the suffix may be transformed (by means of left symmetry) to a sum of monomials
considered in Case~1.

Hence, it remains to consider the case when each $W_l$ contains
either only ``naked'' letters or only derived ones.
Due to the ordering of letters in $X^{(\omega )}$,
the word $h$ of the form \eqref{eq:BasisNovikov}
has the following property:
there exists $1\le l<k$ such that all $W_i$ for $i\le l$ consist of only derived letters and for $i>l$ all $W_i$ are nonassociative words in ``naked'' letters.
Then use right-commutativity to transform $h$ to the form
$h =
(\cdots ((\cdots ((W_1W_{l+1})W_2)\cdots W_{l})W_{l+2}) \cdots W_k)$.
Here $W_1 = y^{(n)}u$, $n>0$, $u$ consists of derived letters,
$W_{l+1} = xv$, $x\in X$, $v$ consists of ``naked'' letters. The subword $W_1W_{l+1}$ may be transformed to
$\bigl(y^{(n)}(xv)\bigr)u$, $n>0$, by right commutativity, and its prefix
$y^{(n)}(xv)$ transforms (by induction on $\deg v$) to a form described in Case~1 by left symmetry:
\[
y^{(n)}(xv) = \bigl(y^{(n)}x\bigr)v - \bigl(xy^{(n)}\bigr)v + x\bigl(y^{(n)}v\bigr).
\tag*{\qed}
\]
\renewcommand{\qed}{}
\end{proof}

\begin{Theorem}\label{thm:Weight-Embedding}
If
$\Var $ is a binary operad such that
$\Var\circ \Nov = \Var\otimes \Nov $
then every $D\Var $-algebra is special.
\end{Theorem}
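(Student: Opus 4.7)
The plan is to realize a given $D\Var$-algebra $V$ as a subalgebra of the derived algebra of its universal differential $\Var$-envelope. Present $V = D\Var\langle X\rangle/K$ as a quotient of a free $D\Var$-algebra by a $D\Var$-ideal $K$, and let $F := \Var\Der\langle X, d\rangle$ be the free differential $\Var$-algebra on $X$. Since the Hadamard and Manin white products are symmetric, the hypothesis $\Var\circ\Nov = \Var\otimes\Nov$ gives $\Nov\circ\Var = \Nov\otimes\Var$, so Lemma~\ref{lem:Weight-criterion} applies and identifies the weight-$(-1)$ part $F_{-1}$ with $D\Var\langle X\rangle$; in particular $K$ embeds into $F_{-1}$. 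Form $A := F/I$, where $I$ is the differential $\Var$-ideal of $F$ generated by $K$. Because $K \subseteq I$, there is a well-defined $D\Var$-homomorphism $V = F_{-1}/K \to A^{(d)}$, and to prove $V$ is special it suffices to show this map is injective, i.e., $I \cap F_{-1} \subseteq K$.

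To establish $I \cap F_{-1} \subseteq K$, first reduce a typical element of $I$ to a sum of multilinear insertion terms
\[
T = \mu\bigl(d^{n_1}(k_1), \ldots, d^{n_r}(k_r), y_1^{(s_1)}, \ldots, y_m^{(s_m)}\bigr),
\]
where $\mu$ is an iterated $\Var$-operation, $k_j \in K$, $y_j \in X$, and all derivatives have been pushed onto individual atoms by the Leibniz rule. Assume $T$ has weight $-1$. Introducing fresh placeholders $z_1, \ldots, z_r$ in place of the $k_j$'s, consider the generic element
\[
\widetilde T = \mu\bigl(d^{n_1}(z_1), \ldots, d^{n_r}(z_r), y_1^{(s_1)}, \ldots, y_m^{(s_m)}\bigr)
\]
in the free differential $\Var$-algebra on $Z \cup \{y_1, \ldots, y_m\}$. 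Since $\widetilde T$ has weight $-1$, Lemma~\ref{lem:Weight-criterion} expresses it as a $D\Var$-polynomial $P(z_1, \ldots, z_r, y_1, \ldots, y_m)$ in these variables. Substituting $z_j \mapsto k_j$ recovers $T$ inside $D\Var\langle X\rangle$; because $P$ is multilinear in each $z_j$, every $D\Var$-monomial of $P$ contains each $z_j$ exactly once, so after substitution each monomial lies in the $D\Var$-ideal generated by the $k_j$'s, hence in $K$. Therefore $T \in K$, and summing over the decomposition yields $I \cap F_{-1} \subseteq K$ as required.

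The principal technical step is the reduction at the start of the second paragraph: one has to be sure that every element of $I$ admits a multilinear-insertion decomposition in which all $d$'s act on single atoms from $X$ or $K$. This requires systematic use of the Leibniz rule to move derivatives inward and, if the same $k \in K$ occurs more than once in one term, a polarization step to split it into distinct placeholders (available since $\operatorname{char}\Bbbk = 0$). Once that reduction is in place, the weight criterion and the hypothesis do the essential work, and the substitution argument is routine.
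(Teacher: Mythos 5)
Your proposal is correct and follows essentially the same route as the paper: present $V$ as $D\Var\langle X\rangle/K$, pass to the universal differential envelope $F/I$, and show $I\cap F_{-1}\subseteq K$ by writing an element of the differential ideal as a substitution of generators of $K$ into weight-$(-1)$ differential polynomials, which Lemma~\ref{lem:Weight-criterion} converts into $D\Var$-polynomials linear in the inserted slots. The only differences are presentational (the paper uses a single placeholder $t$ where you use several $z_j$, and it leaves the linearity-in-the-placeholder point implicit where you spell it out), so no further comment is needed.
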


\begin{proof}
Suppose $V$ is a $D\Var $-algebra.
Then $V$ may be presented as a quotient
of a free algebra $D\Var\langle X\rangle $ modulo an ideal $I$.
Consider the embedding $D\Var\langle X\rangle \subset \Var\bigl\langle X^{(\omega )}\bigr\rangle $
and denote by~$J$ the differential ideal of $\Var\bigl\langle X^{(\omega )}\bigr\rangle $
generated by $I$.
Then $U = \Var \bigl\langle X^{(\omega )}\bigr\rangle /J$ is the universal enveloping
differential $\Var $-algebra of $V$. It remains to prove that $J\cap D\Var\langle X\rangle = I$,
namely, the ``$\subseteq $'' part is not a trivial one.

Assume $f\in J$. Then there exists a family of (differential) polynomials
$\Phi_i \in \Var\bigl\langle (X\cup\{t\})^{(\omega )} \bigr\rangle $
such that $f = \sum_i \Phi_i|_{t=g_i}$, for some $g_i\in I$.
If, in addition, $f\in D\Var\langle X\rangle $ then
$\wt (f)=-1$. Since $\wt g_i=-1$, we should have $\wt \Phi_i=-1$ for all $i$.
By Lemma~\ref{lem:Weight-criterion}, every polynomial $\Phi_i$ may be
represented as an element of $D\Var\langle X\cup\{t\}\rangle $,
so $\Phi_i|_{t=g_i} \in I$ for all $i$, and thus $f\in I$.
\end{proof}

\begin{Corollary}\label{cor:NovDer}
Every $D\Nov$-algebra $V$ with operations $\succ $ and $\prec$
can be embedded
into a commutative algebra with two commuting derivations $d$
and $\partial $
so that
$x\succ y = \partial(x)d(y)$, $x\prec y = x\partial d(y)$
for all $x,y\in V$.
\end{Corollary}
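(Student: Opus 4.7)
The plan is to chain two embeddings and then lift a derivation. First, by Proposition~\ref{prop:DNovikov}, $\Nov\circ\Nov = \Nov\otimes\Nov$, so Theorem~\ref{thm:Weight-Embedding} applies: the $D\Nov$-algebra~$V$ embeds into a Novikov algebra $(N,\cdot)$ with a derivation $D$ such that $x\succ y = D(x)\cdot y$ and $x\prec y = x\cdot D(y)$ for $x,y\in V$. Second, by the embedding theorem of \cite{BCZ-2017}, $(N,\cdot)$ embeds into a commutative algebra $(C,*)$ with a derivation $d$ satisfying $x\cdot y = x*d(y)$ on $N$. What remains is to lift $D$ from $N$ to a derivation $\partial$ of $C$ that commutes with $d$; this is the heart of the argument.

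For the lift I would realize $C$ concretely as the universal enveloping differential commutative algebra $C=\Com\bigl\langle N^{(\omega)}\bigr\rangle/J$, where $N^{(\omega)}=\bigl\{x^{(n)}\colon x\in N,\ n\ge 0\bigr\}$, the derivation $d$ sends $x^{(n)}\mapsto x^{(n+1)}$, and $J$ is the differential ideal generated by the $\Bbbk$-linearity relations in degree zero and the Novikov relations $(x\cdot y)^{(0)}-x^{(0)}*y^{(1)}$, $x,y\in N$; injectivity of $N\hookrightarrow C$ is \cite{BCZ-2017}. Define $\partial$ on the free commutative algebra $\Com\bigl\langle N^{(\omega)}\bigr\rangle$ as the derivation acting on generators by $\partial\bigl(x^{(n)}\bigr)=D(x)^{(n)}$. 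Then $[\partial,d]$ is a derivation of $\Com\bigl\langle N^{(\omega)}\bigr\rangle$ that vanishes on generators, so $\partial$ and $d$ commute identically.

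The main obstacle is the descent step $\partial(J)\subseteq J$. The linearity generators are handled by $\Bbbk$-linearity of $D$; for a Novikov generator $(x\cdot y)^{(0)} - x^{(0)}*y^{(1)}$, applying $\partial$ yields
\[
\bigl(D(x)\cdot y\bigr)^{(0)} - D(x)^{(0)}*y^{(1)} + \bigl(x\cdot D(y)\bigr)^{(0)} - x^{(0)}*D(y)^{(1)},
\]
which is a sum of two Novikov generators precisely because $D$ is a derivation of the Novikov product on $N$. Hence $\partial$ descends to a derivation of $C$ commuting with $d$ and extending $D$. Composing $V\hookrightarrow N\hookrightarrow C$ finally gives $x\succ y = D(x)\cdot y = D(x)*d(y) = \partial(x)d(y)$ and $x\prec y = x\cdot D(y) = x*d\bigl(D(y)\bigr) = x\partial d(y)$, matching the claimed formulas.
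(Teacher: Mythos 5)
Your proof is correct, but it takes a genuinely different route from the paper's. The paper stays at the level of free algebras: it presents $V$ as $D\Nov\langle X\rangle/I$, uses the chain $D\Nov\langle X\rangle\subset\Nov\bigl\langle X^{(\omega)}\bigr\rangle\subset\Com\bigl\langle X^{(\omega,\omega)}\bigr\rangle$ together with a bigraded weight characterization of $D\Nov\langle X\rangle$ inside the doubly differential free commutative algebra, and then reruns the homogeneity argument of Theorem~\ref{thm:Weight-Embedding} once, for the $(d,\partial)$-differential ideal generated by~$I$. You instead compose two embedding theorems applied to $V$ itself --- Theorem~\ref{thm:Weight-Embedding} with Proposition~\ref{prop:DNovikov} to get $V\hookrightarrow (N,D)$, then \cite{BCZ-2017} to get $N\hookrightarrow (C,*,d)$ --- and supply the one genuinely new ingredient, namely the lift of $D$ to a derivation $\partial$ of the universal commutative envelope~$C$. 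Your descent check $\partial(J)\subseteq J$ is the right computation and works precisely because $D$ is a derivation of the Novikov product, and $[\partial,d]=0$ is immediate on generators since both are derivations of a free commutative algebra. What your approach buys is modularity and an explicit structural reason why the two derivations commute; what the paper's approach buys is uniformity with the rest of Section~3 (a single weight argument, now bigraded) and it sidesteps having to verify that the universal enveloping construction carries derivations. One point you rightly make explicit, and which is essential: the $C$ to which you lift $\partial$ must be the universal envelope, so that the injectivity of $N\hookrightarrow C$ from \cite{BCZ-2017} applies to that specific quotient; an arbitrary commutative differential envelope of $N$ need not admit the lift.
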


\begin{proof}
For a free $D\Nov $-algebra generated by a set $X$, we have the following
chain of inclusions given by Proposition~\ref{prop:DNovikov}:
\[
D\Nov\langle X\rangle \subset
\Nov\Der\langle X, \partial \rangle = \Nov \bigl\langle X^{(\omega )}\bigr\rangle
\subset \Com\Der\bigl\langle X^{(\omega )}, d \bigr\rangle = \Com \bigl\langle X^{(\omega , \omega )} \bigr\rangle .
\]
Here $X^{(\omega ,\omega )} = \bigl(X^{(\omega )}\bigr)^{(\omega )} = \bigl\{x^{(n,m)} \mid
x\in X, n,m\in \mathbb Z_+ \bigr\}$,
a variable $x^{(n,m)}$ represents $d^n\partial^m(x)$. The elements of $D\Nov \langle X\rangle $
are exactly those polynomials in $\Com \bigl\langle X^{(\omega , \omega )} \bigr\rangle$
that can be presented as linear combinations of monomials
\[
x_1^{(n_1,m_1)}\cdots x_k^{(n_k,m_k)},\qquad \sum_i n_i = \sum_i m_i = k+1.
\]
The same arguments as in the proof of Theorem~\ref{thm:Weight-Embedding} imply
the claim.
\end{proof}

Apart from the operads $\Com $
and $\Lie $ considered above, the operads
$\Pois$ and $\As$ governing
the varieties of Poisson and associative algebras,
respectively, also satisfy the conditions of Theorem~\ref{thm:Weight-Embedding}
\cite{KSO2019, SK-2022}.
However, even if $\Nov\circ \Var \ne \Nov\otimes\Var$
then it is still possible
that every $D\Var $-algebra is special.
For example, if $\Var = \Jord$ is the variety
of Jordan algebras then the corresponding operad
is not quadratic and, in particular,
the element
$[u]= x_1x_2'x_3'\otimes x_1(x_2x_3) \in \Nov(3)\otimes \Jord(3)$ does not belong to $(\Nov\circ \Jord)(3)$.
The operad $\Nov\circ \Jord $ is generated by a
single operation $x_1\succ x_2 = x_2\prec x_1$
due to commutativity of $\Jord $.
Hence, $\Nov\circ \Jord$ is a~homomorphic
image of the free operad~$\mathcal F_2$.
On the other hand, we have

\begin{Proposition}\label{prop:DerAComm}
For every non-associative algebra $V$ with
a multiplication $\nu \colon V\otimes V \to V$
there exists an associative algebra $(A,\cdot )$
with a derivation $d$ such that
$V\subseteq A$ and $\nu (u,v) = d(u)\cdot v + v\cdot d(u)$
for all $u,v\in V$.
\end{Proposition}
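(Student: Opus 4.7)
The plan is to construct an explicit universal differential associative algebra containing $V$ and to prove that the canonical map $V\to A$ is an embedding via a Diamond-lemma argument on normal forms. Fix a basis $X=\{x_i\}_{i\in I}$ of $V$ with structure constants $\nu(x_i,x_j)=\sum_k c_{ij}^k x_k$, take the free associative algebra $F=\As\bigl\langle X^{(\omega)}\bigr\rangle$ equipped with the derivation $d\bigl(x_i^{(n)}\bigr)=x_i^{(n+1)}$, and let $J\subseteq F$ be the differential ideal generated by
\[
r_{ij}= x_i^{(1)} x_j^{(0)} + x_j^{(0)} x_i^{(1)} - \sum_k c_{ij}^k x_k^{(0)},\qquad i,j\in I.
\]
Setting $A=F/J$, the induced derivation on $A$ together with the map $\iota\colon V\to A$, $x_i\mapsto x_i^{(0)}+J$, automatically satisfy $\iota(\nu(u,v))=d(\iota(u))\iota(v)+\iota(v)d(\iota(u))$ for all $u,v\in V$, so the proposition reduces to showing $\iota$ is injective.

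Next, apply $d^{m-1}$ to $r_{ij}$ and solve for $x_j^{(0)}x_i^{(m)}$: for every $m\ge 1$ one obtains a rewriting rule
\[
x_j^{(0)}x_i^{(m)} \longrightarrow \sum_k c_{ij}^k x_k^{(m-1)} - x_i^{(m)}x_j^{(0)} - \sum_{l=0}^{m-2}\binom{m-1}{l}\bigl(x_i^{(l+1)}x_j^{(m-1-l)}+x_j^{(m-1-l)}x_i^{(l+1)}\bigr),
\]
and together these rules generate $J$ as an ordinary two-sided ideal of $F$. Call a monomial in $F$ \emph{normal} if it contains no subword of the form $x_j^{(0)}x_i^{(m)}$ with $m\ge 1$; equivalently, in a normal monomial every derived letter precedes every naked letter. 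I would order monomials by length, breaking ties lexicographically from the left under the total order on letters in which $x_i^{(s)}>x_j^{(t)}$ precisely when $s<t$ (ties among letters of equal derivative order being broken by any fixed order on $X$). Each monomial on the right-hand side of the rule above is then strictly smaller than $x_j^{(0)}x_i^{(m)}$: the single-letter terms $x_k^{(m-1)}$ have smaller length, while every length-two right-hand-side monomial begins with a letter of derivative order at least $1$, which is lex-smaller than $x_j^{(0)}$. Hence the reduction system terminates.

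Finally, every rule has a length-two left-hand side whose first letter is naked and whose second letter is strictly derived, so no two such left-hand sides can overlap---the hypothetical shared letter would have to be simultaneously naked and derived---and no inclusion ambiguity is possible either. Bergman's Diamond Lemma therefore applies with no non-trivial ambiguities, and the set of normal monomials descends to a linear basis of $A$. In particular the distinct naked single letters $\bigl\{x_i^{(0)}+J\bigr\}_{i\in I}$ are linearly independent in $A$, so $\iota$ is injective and $V$ embeds in $A$ as required. The only delicate point is arranging the monomial order so that every right-hand side strictly decreases; once that is done, the absence of ambiguities is an immediate structural consequence of the length-two pattern of the left-hand sides.
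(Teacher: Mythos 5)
Your proof is essentially the paper's: the same presentation of the universal differential associative envelope by the relations $a'b+ba'-\nu(a,b)$ and their formal derivatives, and the same observation that the leading words are length-two products of one naked and one derived letter and therefore admit no overlap or inclusion ambiguities, so the Diamond Lemma (equivalently, the Gr\"obner--Shirshov basis criterion) yields the embedding; the only cosmetic difference is that you reduce $x_j^{(0)}x_i^{(m)}$ while the paper reduces $a^{(m)}b^{(0)}$. One detail to patch: your deg-lex order with $x^{(s)}>x^{(t)}$ for $s<t$ fails the descending chain condition (e.g.\ $x^{(0)}y^{(0)}>x^{(0)}y^{(1)}>x^{(0)}y^{(2)}>\cdots$), so Bergman's hypotheses are not literally satisfied; this is harmless because every reduction preserves or lowers both the word length and the total derivative weight, so only finitely many exponent patterns can occur in any reduction sequence, but the paper avoids the issue altogether by ordering $a^{(n)}\le b^{(m)}$ iff $(n,a)\le(m,b)$ lexicographically, which makes deg-lex a genuine well-order.
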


In other words, we are going to show that every
non-associative algebra $V$ embeds into the derived
anti-commutator algebra \smash{$\bigl(A^{(+)}\bigr)^{(d)}$} for an appropriate
associative differential algebra $(A,d)$.

\begin{proof}
Let us choose a linear basis $B$ of $V$
equipped with an arbitrary total order $\le $
such that $(B,\le )$ is a well-ordered set.
Then define $F$ to be the free associative algebra
generated by $B^{(\omega )}$.
Induce the order $\le $ on $B^{(\omega )}$
by the following rule:
\[
a^{(n)} \le b^{(m)} \iff (n,a)\le (m,b)\ \text{lexicographically},
\]
and expand it to the words in $B^{(\omega )}$ by the deg-lex rule
(first by length, then lexicographically).

Consider the set of defining relations
\begin{gather*}
S = \bigg\{a^{(n)}b + \sum\limits_{s\ge 1} \binom{n-1}{s}\!
\bigl(a^{(n-s)}b^{(s)} +b^{(s)}a^{(n-s)} \bigr)
+ba^{(n)} - \nu(a,b)^{(n-1)} \mid a,b\in B, n\ge 1\bigg \}.
\end{gather*}
All relations in the set $S$ are obtained
from $a'b+ba'-\nu(a,b)$ by formal derivation
$d\colon x^{(s)}\to x^{(s+1)}$, $x^{(s)}\in B^{(\omega )}$.
Hence, $A=F/(S)$ is a differential associative algebra,
and the map $\varphi \colon V\to A^{(+)}$, $\varphi( v)= v+(S)$, $v\in V$,
preserves the operation, i.e.,
$\varphi(\nu(u,v)) =
d(\varphi(u))\cdot \varphi (v) +\varphi(v)\cdot d(\varphi(u))$ for all $u,v\in V$.

The principal parts of $f\in S$ relative to the order
$\le $ are $a^{(n)}b$, $a,b\in B$, $n\ge 1$.
These words have no compositions of inclusion or intersection, hence, $S$ is a Gr\"obner--Shirshov
basis in $F$ and the images of all variables from $B$ are linearly independent in $A$ since they are $S$-reduced (see, e.g.,~\cite{BC-bull2014} for the
definitions).
Therefore, $\varphi \colon V\to A^{(+)}$ is the desired embedding.
\end{proof}

Consider the variety $\SJord$ generated by all special Jordan algebras (i.e., embeddable into associative ones with respect
to the anti-commutator).
In particular, for every associative algebra~$(A,\cdot)$ with a derivation $d$
the same space $A$ equipped with a new operation
$\nu(u,v) = d(u)\cdot v + v\cdot d(u)$, $u,v\in A$,
is an algebra from $D\SJord$.
Proposition~\ref{prop:DerAComm} implies that
there are no identities that hold for the binary operation $\nu $
like that. Hence, the varieties $D\SJord = D\Jord$ coincide
with the variety of all nonassociative algebras with one operation.
However, again from Proposition~\ref{prop:DerAComm} every
$D\Jord $-algebra embeds into an appropriate Jordan algebra
(even a special Jordan algebra)
with a derivation.

In the next section, we find an example of a variety $\Var $
for which the embedding statement fails.

\section{Dendriform splitting and a non-special pre-Novikov algebra}

Another example of a variety
$\Var $ not satisfying the conditions
of Theorem~\ref{thm:Weight-Embedding}
is the class $\Zinb$ of Zinbiel (dual Leibniz or pre-commutative) algebras.
This is a particular case of the dendriform
splitting of a binary operad described in
\cite{BBGN, GubKol2014}.
Namely,
if $\Var $ is a variety of algebras with (one or more) binary operation
$\mu (x,y) = xy$ satisfying a family
of multi-linear identities $\Sigma $
then
$\pre\Var $ is a variety of algebras with
duplicated set of binary operations
$\mu_\vdash (x,y)=x\vdash y $,
$\mu_\dashv (x,y) = x\dashv y$
satisfying a set of identities $\pre\Sigma $
defined as follows.
Assume $f = f(x_1,\dots, x_n)$ is a multi-linear polynomial of degree $\le n$, and let $k\in \{1,\dots, n\}$.
Suppose $u$ is a nonassociative monomial in the variables
$x_1,\dots, x_n$ such that each $x_i$ appears in $u$
no more than once.
Define a polynomial~$u^{[k]}$ in~$x_1,\dots, x_n$
relative to the operations $\mu_\vdash $, $\mu _\dashv$
by induction on the degree.
If $u=x_i$ then
$u^{[k]} = x_i$;
if $u = vw$ and $x_k$ appears in $v$ (or in $w$) then
$u^{[k]} = v^{[k]}\dashv w^{[k]}$ (or, respectively,
$v^{[k]}\vdash w^{[k]}$);
if $x_k$ does not appear in $u$ then set
$u^{[k]}=
v^{[k]}\dashv w^{[k]} + v^{[k]}\vdash w^{[k]}$.
Transforming each monomial $u$ in the
multi-linear polynomial $f$
in this way, we get $f^{[k]}(x_1,\dots, x_n)$.
The collection of all such $f^{[k]}$
for $f\in \Sigma $, $k=1,\dots,\deg f$,
forms the set of defining relations of a new variety denoted
$\pre\Var $.

For example, for $f(x_1,x_2,x_3)
= (x_1x_2)x_3 - x_1(x_2x_3)$
the polynomials $f^{[k]}$, $k=1,2,3$, are given by
\begin{gather}
f^{[1]} = (x_1\dashv x_2)\dashv x_3 - x_1\dashv (x_2\dashv x_3 + x_2\vdash x_3),\nonumber \\
f^{[2]}
= (x_1\vdash x_2)\dashv x_3 - x_1\vdash (x_2\dashv x_3),\nonumber\\
f^{[3]}
= (x_1\vdash x_2 + x_1\dashv x_2)\vdash x_3 - x_1\vdash (x_2\vdash x_3).\label{eq:preAss}
\end{gather}
These identities define the variety of pre-associative
or dendriform algebras \cite{Loday}.

If the initial operation was commutative or anti-commutative then the set of identities $\pre\Sigma $
includes $x_1\vdash x_2 = \pm x_2\dashv x_1$,
so the operations in $\pre\Var $ are actually
expressed via $\mu_\vdash $ or $\mu_\dashv $.
For example, $\Var=\Lie$ produces the variety
$\pre\Lie$ of left- or right-symmetric algebras
(depending on the choice of $\vdash $ or $\dashv$).
If $\Var = \Com$ then, in terms of the operation
$x\cdot y = x\dashv y = y\vdash x$,
all three identities \eqref{eq:preAss}
of pre-associative algebras are equivalent to~\eqref{eq:Zinbiel}.

In a similar way, one may derive the identities
of a $\pre\Nov$-algebra by means of the dendriform
splitting applied to \eqref{eq:LSym} and \eqref{eq:RCom}.
Routine simplification leads us to the following
definition: a $\pre\Nov$-algebra is a linear space with
two bilinear operations $\vdash $, $\dashv$ satisfying
\begin{gather}
 (x_1\dashv x_2)\dashv x_3 = (x_1\dashv x_3)\dashv x_2, \nonumber\\
 (x_1\vdash x_2)\dashv x_3 = (x_1\vdash x_3)\vdash x_2 + (x_1\dashv x_3)\vdash x_2, \nonumber\\
 (x_1\dashv x_2)\dashv x_3 - x_1\dashv (x_2\dashv x_3) - x_1\dashv (x_2\vdash x_3)
 =
 (x_2\vdash x_1)\dashv x_3 - x_2\vdash (x_1\dashv x_3),
 \nonumber\\
 (x_1\vdash x_3)\dashv x_2 - x_1\vdash (x_2\vdash x_3)
 =
 (x_2\vdash x_3)\dashv x_1 - x_2\vdash (x_1\vdash x_3).\label{eq:pre-Novikov-2}
\end{gather}

The formal change of operations $x\dashv y = x\prec y$, $x\vdash y = y\succ x$
turns \eqref{eq:pre-Novikov-2} exactly into \eqref{eq:pre-Novikov}.
Hence, the operad $\pre\Nov = \pre D\Com$ defines
the same class of algebras as $D\Zinb = D\pre\Com $.

\begin{Remark}
This is not hard to compute that
$\pre D\Lie = D\pre\Lie$ and $\pre D\As = D\pre\As$.
In general, for every binary operad $\Var $ there exists
a morphism of operads $\pre D\Var \to D\pre\Var $
(i.e., every $D\pre\Var $-algebra is a $\pre D\Var$-algebra).
We do not know an example when
this morphism is not an isomorphism, i.e., when the
operations $\pre$ and $D$ applied to a binary operad
do not commute.
\end{Remark}

An equivalent way to define the variety
$\pre\Var $ was proposed in \cite{GubKol2014}.
Let $\Perm $ stand for the variety of associative
algebras that satisfy left commutativity
\[
x_1x_2x_3 = x_2x_1x_3.
\]
An algebra $V$ with two operations $\dashv$, $\vdash $
is a $\pre\Var$-algebra if and only if
for every $P\in \Perm $ the space
$P\otimes V$ equipped with the single operation
\begin{equation}\label{eq:Dend-Perm}
(p\otimes u)(q\otimes v) = pq\otimes (u\vdash v) +
qp\otimes (u\dashv v),
\qquad
p,q\in P,\quad u,v\in V,
\end{equation}
is a $\Var $-algebra.
The same statement holds in the case when the binary operad
$\Var $ is generated by several operations.

\begin{Remark}
For an arbitrary binary operad,
there is a morphism of operads
$\zeta\colon \pre\Var \to \Zinb\circ \Var $. Namely,
for every $A\in \Var $ and for every $Z\in \Zinb$
the space $Z\otimes A$ equipped with two operations
\[
(z\otimes a)\vdash (w\otimes b) = (w\cdot z)\otimes ab,
\qquad
(z\otimes a)\dashv (w\otimes b) = (z\cdot w)\otimes ab,
\]
for $z,w\in Z$, $a,b\in A$, is a $\pre\Var $-algebra.

However, $\pre\Var $ and $\Zinb\circ \Var $ are
not necessarily isomorphic. For example,
if $\Var $ is defined by the identity $(x_1\cdot x_2)\cdot x_3=0$
then the kernel of $\zeta $ is nonzero.
\end{Remark}

As a corollary, we obtain

\begin{Proposition}
The operad $\pre\Nov $
is isomorphic to the Manin white product $\Zinb\circ \Nov$.
\end{Proposition}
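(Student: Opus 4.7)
The plan is to chain three isomorphisms: $\pre\Nov \cong D\Zinb \cong \Nov \circ \Zinb \cong \Zinb \circ \Nov$.

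First, the paragraph immediately preceding the statement already gives the isomorphism $\pre\Nov \cong D\Zinb$: the formal substitution $x \dashv y = x \prec y$, $x \vdash y = y \succ x$ transforms the defining identities \eqref{eq:pre-Novikov-2} of $\pre\Nov$ into the identities \eqref{eq:pre-Novikov} of $D\Zinb$, and this establishes a bijection between the two spaces of multi-linear defining relations in degree~3. Since both operads are binary and quadratic, generated by their degree-2 components, this substitution extends uniquely to an isomorphism of operads.

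Second, the theorem from \cite{KSO2019} quoted in Section~2 asserts that $D\Var$ is governed by $\Nov\circ \Var$ for any binary operad $\Var$; applied to $\Var=\Zinb$ this gives $D\Zinb \cong \Nov\circ \Zinb$.

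For the third isomorphism I would invoke the general symmetry of the Manin white product, namely $\mathcal P_1\circ \mathcal P_2 \cong \mathcal P_2\circ \mathcal P_1$ for any pair of binary operads. Its proof is short: the swap $f\otimes g\mapsto g\otimes f$ is tautologically an isomorphism of Hadamard products $\mathcal P_1\otimes \mathcal P_2 \to \mathcal P_2\otimes \mathcal P_1$, because the symmetric group action and operadic composition on the Hadamard product are by definition componentwise and therefore invariant under swapping the two tensor factors. This swap sends the generating subspace $\mathcal P_1(2)\otimes\mathcal P_2(2)$ of the Manin white product on one side onto $\mathcal P_2(2)\otimes\mathcal P_1(2)$ on the other, so the suboperads they generate are isomorphic as well. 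Taking $\mathcal P_1 = \Nov$, $\mathcal P_2 = \Zinb$ completes the chain.

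I do not expect any real obstacle here: the first step is a direct inspection of the explicit identities \eqref{eq:pre-Novikov} and \eqref{eq:pre-Novikov-2} already worked out in the text, the second step is quoted as an established theorem, and the third is an elementary general feature of the Manin product that does not use anything specific about $\Nov$ or $\Zinb$. If a reader prefers a more intrinsic construction, one could alternatively take the morphism $\zeta\colon \pre\Nov \to \Zinb\circ \Nov$ introduced in the remark just before the statement and verify by direct computation of dimensions (or by matching generators and defining relations in degree~3) that $\zeta$ is bijective when $\Var = \Nov$; but the chain above is cleaner and reuses only facts that are already in place.
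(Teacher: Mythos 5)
Your proposal is correct and follows essentially the same route as the paper: the identification $\pre\Nov\cong D\Zinb$ via the change of operations matching \eqref{eq:pre-Novikov-2} with \eqref{eq:pre-Novikov}, the theorem $D\Zinb\cong\Nov\circ\Zinb$ from \cite{KSO2019}, and the (implicit in the paper, explicit in your write-up) symmetry of the Manin white product. Your added justification of $\mathcal P_1\circ\mathcal P_2\cong\mathcal P_2\circ\mathcal P_1$ via the componentwise swap on the Hadamard product is sound and fills in a step the paper leaves tacit.
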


\begin{Remark}
Let $V$ be a $D\Zinb$-algebra.
In terms of pre-Novikov operations $\dashv$ and $\vdash$, the conformal algebra structure mentioned in
Remark~\ref{rem:Conformal} is expressed as
\[
(u_{(\lambda)} v) = \partial (v\dashv u) + \lambda (u\vdash v + v\dashv u),
\qquad u,v\in V.
\]
This is indeed a left-symmetric conformal algebra which is easy
to check via the conformal analogue of \eqref{eq:Dend-Perm}.
By slight abuse of notations,
for every $\Perm $-algebra $P$ the operation
\begin{gather*}
[(p\otimes u)_{(\lambda) } (q\otimes v)]
= pq \otimes (u_{(\lambda )} v) - qp\otimes (v_{(-\partial-\lambda )} u) \\
\qquad{}=pq\otimes (\partial (v\dashv u) + \lambda (u\vdash v + v\dashv u))
-qp\otimes (\partial (u\dashv v) - (\lambda+\partial) (v\vdash u + u\dashv v))\\
\qquad{}=\partial (qp\otimes v\vdash u+pq\otimes v\dashv u)
+\lambda (pq\otimes u\vdash v +qp\otimes u\dashv v
+ qp\otimes v\vdash u + pq\otimes v\dashv u)
\end{gather*}
is exactly the quadratic Lie conformal algebra structure \cite{Xu1999}
on $\Bbbk [\partial ]\otimes P\otimes V$
corresponding to the Novikov algebra $P\otimes V$:
\[
[x_{(\lambda )} y] = \partial (y x) + \lambda (x y + y x)
\]
for $x=p\otimes u$, $y=q\otimes v$, and the product
is given by \eqref{eq:Dend-Perm}.

Hence the construction of a $\pre\Lie$ conformal algebra from a
$D\Zinb $-algebra is a quite clear consequence of
the commutativity of tensor product.
\end{Remark}

The final statement of this section shows a substantial difference
between the properties of Novikov algebras and $\pre\Nov $-algebras.
Although the defining identities of $\pre\Nov $
are exactly those that hold on differential Zinbiel algebras
with operations \eqref{eq:Der-op}
(i.e.,
the dendriform analogue of \cite[Theorem~7.8]{DzhLofwall2002}
holds),
the general embedding statement (i.e., the dendriform analogue
of \cite[Theorem~3]{BCZ-2017})
turns to be wrong.

\begin{Theorem}
If the characteristic of the base field $\Bbbk $ is not~$2$ or~$3$
then there exists a $D\Zinb $-algebra which cannot be embedded into a differential Zinbiel algebra.
\end{Theorem}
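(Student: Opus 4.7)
The plan is to exhibit a finitely presented $D\Zinb$-algebra $V$ together with a nonzero element $f\in V$ that is forced to vanish in every differential Zinbiel algebra receiving $V$, thus ruling out any embedding. Concretely, I would fix a finite alphabet $X$ and search for multi-linear pre-Novikov polynomials $f, g_1, \ldots, g_k \in \pre\Nov\langle X\rangle$ with the following discrepancy: under the canonical embedding $\pre\Nov\langle X\rangle \hookrightarrow \Zinb\Der\langle X, d\rangle$, the element $f$ lies in the differential Zinbiel ideal $J$ generated by the images of the $g_i$, while $f$ does not lie in the pre-Novikov ideal $I \subseteq \pre\Nov\langle X\rangle$ generated by the $g_i$. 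Setting $V = \pre\Nov\langle X\rangle / I$, the class of $f$ in $V$ is nonzero by construction, while in the universal differential Zinbiel envelope $U = \Zinb\Der\langle X, d\rangle / J$ it becomes zero; since every differential Zinbiel envelope of $V$ factors through $U$, the algebra $V$ admits no differential Zinbiel embedding.

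The verification would proceed in two parts. To certify $f \in J$, I would exhibit an explicit rewriting of $f$ using the Zinbiel identity \eqref{eq:Zinbiel} to expand triple products of the form $d(a) \cdot (b \cdot c)$ as $(d(a) \cdot b) \cdot c - d(a) \cdot (c \cdot b)$, together with the Leibniz rule to commute $d$ past Zinbiel multiplications. The structural reason such a rewriting is available but not expressible within $\pre\Nov$ itself is the failure of the hypothesis of Lemma \ref{lem:Weight-criterion} for $\Var = \Zinb$, namely $\Nov \circ \Zinb \subsetneq \Nov \otimes \Zinb$: this strict inclusion supplies weight $-1$ elements of $\Zinb\langle X^{(\omega)}\rangle$ whose presentations genuinely use differential-Zinbiel manipulations beyond what pre-Novikov operations encode. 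To certify $f \notin I$, I would construct a concrete finite-dimensional pre-Novikov model in which the $g_i$ vanish but $f$ evaluates to a specified nonzero vector, by taking the multi-linear components of $\pre\Nov\langle X\rangle$ up to the degree of $f$, quotienting by $I$, and computing a normal-form basis via a Gr\"obner--Shirshov calculation based on the defining identities \eqref{eq:pre-Novikov-2}.

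The main obstacle is the discovery of the configuration $(X, f, \{g_i\})$. I would approach this search by enumerating low-arity multi-linear elements of $\Nov(n) \otimes \Zinb(n)$ that fail to descend to $(\Nov \circ \Zinb)(n)$ and constructing corresponding candidate ideals, starting with the smallest arities (likely $n = 3$ or $n = 4$). The characteristic hypothesis $\operatorname{char}\Bbbk \notin \{2, 3\}$ enters both certification steps through binomial coefficients $\binom{2}{1} = 2$ and $\binom{3}{1} = 3$ arising from the Leibniz expansion of $d^2$ and from the triple-product Zinbiel reshuffles; invertibility of these integers is what prevents the coefficient of $f$ in the finite-dimensional model from accidentally collapsing to zero, and is thus the natural arithmetic assumption under which the counter-example persists.
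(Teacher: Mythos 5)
Your overall strategy is exactly the one the paper uses: present $V$ as $D\Zinb\langle X\rangle/I$, pass to the universal differential Zinbiel envelope $U=\Zinb\Der\langle X,d\rangle/J$ where $J$ is the differential ideal generated by the same relations, and exhibit an element lying in $J\cap D\Zinb\langle X\rangle$ but not in $I$; such an element dies in every differential Zinbiel algebra receiving $V$, so no embedding exists. The two certification steps you describe (a rewriting argument for membership in $J$, a linear-algebra/normal-form argument for non-membership in $I$) are also the paper's two steps.

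The genuine gap is that you never produce the configuration, and the configuration \emph{is} the theorem. An existence statement of this kind is not proved by describing a search procedure together with a heuristic for why the search should succeed; the strict inclusion $\Nov\circ\Zinb\subsetneq\Nov\otimes\Zinb$ does not by itself guarantee that some quotient of $D\Zinb\langle X\rangle$ fails to embed (the paper itself points out, via $\Var=\Jord$, that failure of the Hadamard-product criterion is compatible with every derived algebra being special). The paper's actual witness is very small: $X=\{a,b\}$, a single non-multilinear relation $f=b\prec b$, and the element $h=((a\prec b)\prec b)\prec b$, which equals $3\,a\cdot((b'\cdot b')\cdot b')=3\bigl(a\cdot(f'\cdot b')-a\cdot(f\cdot b'')\bigr)\in J$ by right commutativity of Zinbiel algebras, yet is shown to lie outside $I$ by writing all $16$ products of $a$, $b$, $f$ beginning with $a$ in the Loday normal form of the free Zinbiel algebra and checking that the resulting $16\times 16$ integer matrix does not have $[ab'b'b']$ in its row space. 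Note two points where your plan would need adjustment: (i) you restrict to multilinear $f,g_i$, whereas the paper's relation $b\prec b$ is quadratic in one variable, so your search space may miss the (smallest known) example; (ii) for the step $f\notin I$ you propose a Gr\"obner--Shirshov computation directly in $\pre\Nov\langle X\rangle$, for which no normal form is established in the paper --- the paper instead computes inside the free differential Zinbiel algebra, where Loday's right-normed basis is available, which is the cleaner route. Your account of where $\operatorname{char}\Bbbk\ne 2,3$ enters is also only approximately right: $3$ is inverted when rewriting $a\cdot((b'\cdot b')\cdot b')$ as $\tfrac13((a\cdot b')\cdot b')\cdot b'$, and $2$ appears on the diagonal of the reduced matrix in the rank computation, rather than through Leibniz binomials.
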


\begin{proof}
Consider the free Zinbiel algebra $F$ generated
by
\[
\{a,b\}^{(\omega )} = \bigl\{a,b,a',b', \dots, a^{(n)}, b^{(n)}, \dots \bigr\}.
\]
This is the free differential Zinbiel algebra
with two
generators $a$, $b$, its derivation $d$ maps $x^{(n)}$ to $x^{(n+1)}$
for $x\in \{a,b\}$. The product of two elements $f,g\in F$ is denoted
$f\cdot g$.

For every $f,g\in F$, define $f\prec g$, $f\succ g$
by the rule \eqref{eq:Der-op}:
\[
f\prec g = f\cdot d(g),\qquad f\succ g = d(f)\cdot g.
\]
Then $(F,\prec, \succ )$ is a $D\Zinb $-algebra, and its
subalgebra generated by $a$, $b$ is isomorphic to the free algebra $D\Zinb \langle a,b\rangle $.

Denote $f=b\prec b=b\cdot b' \in D\Zinb \langle a,b\rangle \subset F$, and let $J$ stand for the ideal in $F$ generated by~$f$ and all its derivatives:
\[
J = \bigl(f,f',f'', \dots \bigr) \triangleleft F,\qquad d(J)\subseteq J.
\]
In particular,
\[
h = a\cdot \bigl(f'\cdot b'\bigr)-a\cdot \bigl(f\cdot b''\bigr) \in J.
\]
Let us show that $h\in D\Zinb \langle a,b\rangle \subset J$.
Indeed,
\begin{align*}
h &= a\cdot \bigl(\bigl(b\cdot b'\bigr)'\cdot b'\bigr)-a\cdot \bigl(\bigl(b\cdot b'\bigr)\cdot b''\bigr) \\
 &= a\cdot \bigl(\bigl(b'\cdot b'\bigr)\cdot b'\bigr) + a\cdot \bigl(\bigl(b\cdot b''\bigr)\cdot b'\bigr)-a\cdot \bigl(\bigl(b\cdot b'\bigr)\cdot b''\bigr)
 = a\cdot \bigl(\bigl(b'\cdot b'\bigr)\cdot b'\bigr)
\end{align*}
due to the right commutativity of Zinbiel algebras.
Next,
$(b'\cdot b')\cdot b' = 2b'\cdot (b'\cdot b')$,
so
$(b'\cdot b')\cdot b'+b'\cdot (b'\cdot b')
= \frac{3}{2} (b'\cdot b')\cdot b'$.
Therefore,
\begin{align*}
h &= a\cdot \bigl(\bigl(b'\cdot b'\bigr)\cdot b'\bigr)
= \frac{2}{3} a\cdot \bigl(\bigl(b'\cdot b'\bigr)\cdot b' + b'\cdot \bigl(b'\cdot b'\bigr)\bigr)
 = \frac{2}{3} \bigl(a\cdot \bigl(b'\cdot b'\bigr)\bigr)\cdot b'
 \\
 &= \frac{1}{3} \bigl(\bigl(a\cdot b'\bigr)\cdot b'\bigr)\cdot b'= \frac{1}{3} ((a\prec b)\prec b)\prec b = 2\bigl[ab'b'b'\bigr]
 \in D\Zinb \langle a,b\rangle .
\end{align*}
As in Example~\ref{exmp:Zinbiel}, we denote by
$[x_1x_2\cdots x_{n-1}x_n]$ the following expression
in a Zinbiel algebra:
\[
[x_1x_2\cdots x_{n-1}x_n]
= x_1\cdot (x_2\cdot (\cdots (x_{n-1}\cdot x_n)\cdots )).
\]
Recall \cite{Loday} that all such expressions with $x_i$
from a set $X$ form a linear basis of the free Zinbiel algebra
generated by $X$ (i.e., this is a normal form in $\pre\Com\langle X\rangle $).

Let $I$ be the ideal in $D\Zinb \langle a,b\rangle $
generated by $f$, and let $V = D\Zinb \langle a,b\mid f\rangle
=D\Zinb \langle a,b\rangle/I $.
Then
$F/J$ is the universal differential Zinbiel envelope
of $V$. To prove the theorem, it remains to show that $h\notin I$:
if so then $h+I$
would lie in the kernel of every homomorphism
from $V$ to the derived algebra $Z^{(d)}$
constructed
from a differential Zinbiel algebra $Z$ with a derivation~$d$.

Assume $[ab'b'b']\in I$.
The specific of the Zinbiel identity \eqref{eq:Zinbiel}
is that the first letter remains unchanged in all terms.
Hence, $[ab'b'b']$ should be a
linear combination of the elements
$(a\ast f\star b)$, $(a\ast b\star f)$,
where $\ast, \star \in \{\prec, \succ \}$,
with two possible bracketing each, so we have in total 16 terms
under consideration. Let us write them all in the normal form in $F$:
\begin{gather*}
(a\prec f)\prec b
 = 3\bigl[ab'b'b'\bigr]+\bigl[ab'bb''\bigr]+\bigl[abb'b''\bigr]+\bigl[abb''b'\bigr], \\
(a\prec f)\succ b
 =\bigl[a'bb'b'\bigr]+\bigl[a'b'bb'\bigr]+\bigl[a'b'b'b\bigr]+2\bigl[a'bbb''\bigr]+\bigl[a'bb''b\bigr]+\bigl[abb''b'\bigr]+\bigl[ab''bb'\bigr]\\
\hphantom{(a\prec f)\succ b=}{} +\bigl[ab''b'b\bigr]+2\bigl[abb'b''\bigr]+2\bigl[ab'bb''\bigr]+2\bigl[ab'b''b\bigr]+2\bigl[abbb'''\bigr]+\bigl[abb'''b\bigr], \\
(a\succ f)\prec b
 = \bigl[a'b'bb'\bigr]+2\bigl[a'bb'b'\bigr], \\
(a\succ f)\succ b
 = 2\bigl[a''bbb'\bigr]+\bigl[a''bb'b\bigr] +\bigl[a'bb'b'\bigr]+\bigl[a'b'bb'\bigr]+\bigl[a'b'b'b\bigr]+2\bigl[a'bbb''\bigr]+\bigl[a'bb''b\bigr], \\
a\prec (f\prec b)
 = 2\bigl[ab'b'b'\bigr]+2\bigl[abb''b'\bigr]+2\bigl[abb'b''\bigr], \\
a\prec (f\succ b)
 = a\bigl[b'b'b\bigr]'+a\bigl[b'bb'\bigr]' + a\bigl[bb''b\bigr]' + a\bigl[bbb''\bigr]'
 = \bigl[ab''b'b\bigr] + 2\bigl[ab'b''b\bigr]+ 2\bigl[ab'b'b'\bigr]\\
\hphantom{a\prec (f\succ b)=}{} + \bigl[ab''bb'\bigr]+2\bigl[ab'bb''\bigr] +\bigl[abb'''b\bigr]+\bigl[abb''b'\bigr]+\bigl[abb'b''\bigr]+\bigl[abbb'''\bigr], \\
a\succ (f\prec b)
 = 2\bigl[a'bb'b'\bigr], \\
a\succ (f\succ b)
 = \bigl[a'b'b'b\bigr]+\bigl[a'b'bb'\bigr]+\bigl[a'bb''b\bigr]+\bigl[a'bbb''\bigr], \\
(a\prec b)\prec f
 = 3\bigl[ab'b'b'\bigr] + \bigl[ab'bb''\bigr]+\bigl[abb'b''\bigr]+\bigl[abb''b'\bigr], \\
(a\prec b)\succ f
 = \bigl[a'b'bb'\bigr]+2\bigl[a'bb'b'\bigr] + \bigl[ab''bb'\bigr]+\bigl[abb''b'\bigr]+\bigl[abb'b''\bigr], \\
(a\succ b)\prec f
 = \bigl[a'bb'b'\bigr]+\bigl[a'b'bb'\bigr]+\bigl[a'b'b'b\bigr] + 2\bigl[a'bbb''\bigr]+\bigl[a'bb''b\bigr], \\
(a\succ b)\succ f
 = 2\bigl[a''bbb'\bigr]+\bigl[a''bb'b\bigr]+\bigl[a'b'bb'\bigr]+2\bigl[a'bb'b'\bigr], \\
a\prec (b\prec f)
 = \bigl[ab'b'b'\bigr]+\bigl[abb''b'\bigr] +2\bigl[abb'b''\bigr] +\bigl[ab'bb''\bigr] + \bigl[abbb'''\bigr], \\
a\prec (b\succ f)
 = \bigl[ab''bb'\bigr]+\bigl[ab'b'b'\bigr]+\bigl[ab'bb''\bigr], \\
a\succ (b\prec f)
 = \bigl[a'bb'b'\bigr] + \bigl[a'bbb''\bigr], \\
a\succ (b\succ f)
 = \bigl[a'b'bb'\bigr].
\end{gather*}
Arrange the normal Zinbiel words in the following order:
$[ab'b'b']$, $[ab'bb'']$, $[abb'b'']$, $[abb''b']$, $[ab''bb']$, $[ab''b'b]$, $[ab'b''b]$, $[abbb''']$, $[abb'''b]$, $[a''bb'b]$, $[a''bbb']$, $[a'bb'b']$, $[a'b'bb']$, $[a'b'b'b]$, $[a'bbb'']$, $[a'bb''b]$.
Then the assumption $[ab'b'b']\in I$ is equivalent to the condition
that the row vector $e_1 = (1,0,\dots ,0)\in \Bbbk ^{16}$
belongs to the row space of the matrix
\[ \left(
\begin{array}{cccccccccccccccc}
3& 1& 1& 1& 0& 0& 0& 0& 0& 0& 0& 0& 0& 0& 0& 0\\
0& 2& 2& 1& 1& 1& 2& 2& 1& 0& 0& 1& 1& 1& 2& 1\\
0& 0& 0& 0& 0& 0& 0& 0& 0& 0& 0& 2& 1& 0& 0& 0\\
0& 0& 0& 0& 0& 0& 0& 0& 0& 1& 2& 1& 1& 1& 2& 1\\
2& 0& 2& 2& 0& 0& 0& 0& 0& 0& 0& 0& 0& 0& 0& 0\\
2& 2& 1& 1& 1& 1& 2& 1& 1& 0& 0& 0& 0& 0& 0& 0\\
0& 0& 0& 0& 0& 0& 0& 0& 0& 0& 0& 2& 0& 0& 0& 0\\
0& 0& 0& 0& 0& 0& 0& 0& 0& 0& 0& 0& 1& 1& 1& 1\\
3& 1& 1& 1& 0& 0& 0& 0& 0& 0& 0& 0& 0& 0& 0& 0\\
0& 0& 1& 1& 1& 0& 0& 0& 0& 0& 0& 2& 1& 0& 0& 0\\
0& 0& 0& 0& 0& 0& 0& 0& 0& 0& 0& 1& 1& 1& 2& 1\\
0& 0& 0& 0& 0& 0& 0& 0& 0& 1& 2& 2& 1& 0& 0& 0\\
1& 1& 2& 1& 0& 0& 0& 1& 0& 0& 0& 0& 0& 0& 0& 0\\
1& 1& 0& 0& 1& 0& 0& 0& 0& 0& 0& 0& 0& 0& 0& 0\\
0& 0& 0& 0& 0& 0& 0& 0& 0& 0& 0& 1& 0& 0& 1& 0\\
0& 0& 0& 0& 0& 0& 0& 0& 0& 0& 0& 0& 1& 0& 0& 0
\end{array} \right).
\]
This matrix may be transformed by elementary row and column
transformations with integer coefficients
to the triangular form with $1$, $2$, or $0$ on the diagonal,
so that its rank equals 10 for $\operatorname{char}\Bbbk \ne 2$.
Adding one more row $e_1$ increases the rank, so $[ab'b'b']\notin I$.
\end{proof}

As a corollary, we obtain the following observation which is in some sense
converse to Corollary~\ref{cor:NovDer}.
Suppose $W$ is a Novikov algebra equipped with a Rota--Baxter operator, that is, $R\colon W\to W$ is a linear operator
such that
\[
R(u)R(v) = R(uR(v) + R(u) v), \qquad u,v\in W.
\]
Then, in general, there is no commutative algebra
$(A,* )$ with a derivation $d$ and a Rota--Baxter operator $\rho $
such that $W\subseteq A$,
$u v = u* d(v)$, $R(u)=\rho(u)$, for $u,v\in W$,
and $\rho d=d\rho $. In other words,
a Novikov Rota--Baxter algebra cannot be in general embedded
into a commutative Rota--Baxter algebra with a derivation.

Indeed, assume such a system $(A,*, d, \rho )$ exists
for every Novikov algebra with a Rota--Baxter operator.
Every pre-Novikov algebra $V$ with
operations $\vdash $ and $\dashv $
can be embedded into a
Novikov algebra $W=\widehat V$ with a Rota--Baxter operator
so that $u\vdash v = R(u)v$, $u\dashv v = u R(v)$,
$u,v\in V$ (see, e.g., \cite{GubKol2014}).
We may further embed this $W$ into a differential
commutative Rota--Baxter algebra $(A,*, d, \rho)$
in which
$a\vdash b = \rho(a)* d(b)$,
$a\dashv b = a * d(\rho(b)) = a * \rho(d(b))$,
for $a,b\in W$.
On the other hand, the new operation $\cdot $ on $A$ given
by $a\cdot b = a* \rho(b)$ turns $A$ into a Zinbiel algebra,
$d$ remains a derivation relative to this new operation,
and
$u\vdash v = d(v)\cdot u = v\succ u$, $u\dashv v = u\cdot d(v)= u\prec v$
for $u,v\in V$.
Therefore, we would embed a $D\Zinb $-algebra into a~differential
Zinbiel algebra which is not the case.

\subsection*{Acknowledgments}
F.~Mashurov and B.~Sartayev were supported by the Science Committee of the Ministry of Education and Science of the Republic of Kazakhstan (Grant No.~AP14870282). P.~Kolesnikov was supported by the Program of Fundamental Research RAS (project FWNF-2022-0002).
The authors are grateful to the referees for useful comments.

\pdfbookmark[1]{References}{ref}
\LastPageEnding

\end{document}